\def\e{\varepsilon}
\def\Z{\mathbb{Z}}
\def\G{\Gamma}
\def\FP{{\rm FP}}
\def\<{\langle}
\def\>{\rangle}
\def\-{\overline}
\def\onto{\to \hskip -0.35cm \to}
\def\wFP{{\rm{wFP}}}
\def\eenv{\exists\!\! {\rm{E}}}
\newtheorem{theorem}{Theorem}[section]
\newtheorem{lemma}[theorem]{Lemma} 
\newtheorem{proposition}[theorem]{Proposition}
\newtheorem{corollary}[theorem]{Corollary}
\newtheorem{definition}[theorem]{Definition}
\newtheorem{addenda}[theorem]{Addenda}
\theoremstyle{definition}  
\newtheorem{example}[theorem]{Example}
\newtheorem{remark}[theorem]{Remark}
\numberwithin{equation}{section}
\def\serieslogo@{\relax}
\def\@setcopyright{\relax}
\definecolor{lightgray}{gray}{0.9}  
\begin{document}

\title[Binary Subgroups of Direct Products]{Binary Subgroups of Direct Products}

\author[Bridson]{Martin R.~Bridson}
\address{Martin R.~Bridson\\
Mathematical Institute \\
Andrew Wiles Building\\
Oxford OX2 6GG \\ 
UK} 
\email{bridson@maths.ox.ac.uk}

\subjclass{20F05, 20J05}

\keywords{Binary subgroups, finiteness properties, VSP Theorem, residually-free groups}

\begin{abstract}  We explore an elementary construction that produces finitely presented groups
with diverse homological finiteness properties -- the {\em binary subgroups},
$B(\Sigma,\mu)<G_1\times\dots\times G_m$. 
These full subdirect products require strikingly few generators.
If each $G_i$  is finitely presented, $B(\Sigma,\mu)$  is finitely presented. 
When the $G_i$ are non-abelian limit groups (e.g. free or surface groups), the $B(\Sigma,\mu)$
provide new examples of finitely presented, residually-free groups that
do not have finite classifying spaces and are not of Stallings-Bieri type. These examples
settle a question of Minasyan relating different notions of rank for residually-free groups. Using binary subgroups,
we prove that if $G_1,\dots,G_m$ are perfect groups, each requiring at most $r$ generators,  then
$G_1\times\dots\times G_m$  requires at most $r \lfloor \log_2 m+1 \rfloor$ generators.
\end{abstract} 

\maketitle

\centerline{\em For my friend Vaughan Jones, in memoriam}

\section{Introduction} In the study of infinite groups, the problem of deciding which 
finite subsets of a given group $G$ generate finitely presentable subgroups is notoriously difficult:  
there is no algorithm that can solve this problem when $G$ is a direct product of two non-abelian
free groups for example \cite{miller}. 
Even when one knows that finite presentations exist, constructing them can remain 
impossibly difficult \cite{BW}. 

In this article we will explore an elementary construction that provides a new source of finitely presented 
subdirect products of groups; we call them {\em binary subgroups}. This construction takes as input two positive integers $m$ and $r$,
an $m$-tuple of $r$-generator groups with markings $\mu=(\mu_i:F_r\onto G_i)_{i=1}^m$,
and an $r$-tuple of finite sets of positive integers $\Sigma=(\sigma_1,\dots,\sigma_r)$, with each $|\sigma_i|=m$.
It outputs a subgroup $B(\Sigma,\mu)<G_1\times\dots\times G_m$ that is
finitely presented if all of the $G_i$ are finitely presented. $B(\Sigma,\mu)$ is constructed from the binary expansions of the elements of 
$\bigcup\sigma_i$. I hope to convince the reader that this construction is an intriguing source of groups that merits further
study.

An important feature of the groups $B(\Sigma,\mu)$ is that they
require strikingly few generators: see Corollaries \ref{c:hall} and \ref{c:ashot}. The case where each $G_i$ is free will be of particular
interest. Indeed the construction of $B(\Sigma,\mu)$ is motivated in large part by my
work with  Howie, Miller and Short \cite{BHMS1, BHMS2}
exploring the structure of subgroups of direct products of free groups, surface groups and limit groups.
The VSP Theorem from \cite{BHMS2} will play a prominent role in our discussion, as will homological finiteness properties.  
\medskip 
 
In addition to economical generation and finiteness properties, 
we shall be concerned with the  co-nilpotency class of  binary subgroups.
{\em Co-nilpotency} is a characteristic of 
finitely presented, residually free groups that originates in \cite{BM}; it played a prominent role in \cite{BHMS1, BHMS2}. Let $S<\Lambda_1\times\dots\times \Lambda_m$ be a finitely presented subgroup of a direct product of non-abelian limit groups (e.g. free groups), and suppose that we have reduced to the case where $S$ intersects each $\Lambda_i$ non-trivially and the coordinate projections $S\to \Lambda_i$ are onto (in other
words, $S$ is a {\em full subdirect product}). It is proved in
\cite{BHMS2} that in this case $S$ contains some term of the lower central series of a subgroup of finite index 
$D_0<\Lambda_1\times\dots\times \Lambda_m$. If the first such term is $\gamma_{c+1}(D_0)$, then the co-nilpotency class of $S$ is defined to be $c$.  
Thus $S$ has co-nilpotency class $0$ if it is of finite index and it has co-nilpotency class $1$ if it is virtually the kernel of a map from 
a product of non-abelian limit groups to an infinite abelian group. Groups of the latter kind are, by definition, of Stallings-Bieri type; they have
been extensively studied in connection with higher finiteness properties \cite{stall, bieri}. One of the achievements of \cite{BHMS1} was to exhibit
the first examples of finitely presented, residually free groups whose co-nilpotency class is greater than $1$. The binary subgroups that
we shall describe provide a more elementary construction of such groups with a variety of co-nilpotency classes. For example,
the co-nilpotency classes of the groups $B_0(m)$ and $B_1(m)$ constructed below go to infinity with $m$.

\subsection{Paradigms: the Binary Subgroups $B_0(m)$ and $B_1(m)$}
Before delving into more technical matters, I want to illustrate the basic
construction of this article with explicit examples. 
We work with a free group of rank $r$, denoted by $F$, and fix a basis $\{a_1,\dots,a_r\}$.
Throughout, $F^m$ will denote the $m$-th direct power $F\times\dots\times F$.

We consider two subgroups 
$B_0(m)< B_1(m) < F^m $. The subgroup $B_0(m)$ is generated by the $r\lfloor 1 + \log_2 m\rfloor$ elements $a_{ij}$ defined as follows:
consider the array $A(m)$ with $m$ columns and $\lfloor 1 + \log_2 m\rfloor$ rows where column $k$ is the binary expansion of $k$, with units
at the top; for $j=0,\dots,\lfloor \log_2 m\rfloor$ let $\varepsilon_j(m)$ be the word in the alphabet $\{0,1\}$
that is the $j$-th row of $A(m)$ (so $k=\sum_j\varepsilon_j(k)2^j$); we treat $\varepsilon_j(m)$ as a 
multi-index and, for each $a_i$, define $a_{ij}\in F^m$ to be the element obtained by raising $a_i$ to this multi-index, with the convention 
$a_i^1=a_i$ and $a_i^0=1$ (the identity). 

For example, when $m=18$,  
\begin{table}[ht]
\caption{The binary array $A(m)$ for $m=18$}
\begin{center} 
\begin{tabular}{r|rrrrrrrrrrrrrrrrrrr}
  \hline
& 1 & 2 & 3 & 4 & 5 & 6 & 7 & 8 & 9 & 10 & 11 & 12 & 13 & 14 & 15 & 16 & 17 & 18 \\
  \hline
  $\varepsilon_0$   & 1 & 0 & 1 & 0 & 1 & 0 & 1 & 0 & 1 & 0 & 1 & 0 & 1 & 0 & 1 & 0 & 1 & 0 \\
  $\varepsilon_1$  & 0 & 1 & 1 & 0 & 0 & 1 & 1 & 0 & 0 & 1 & 1 & 0 & 0 & 1 & 1 & 0 & 0 & 1\\
  $\varepsilon_2$   & 0 & 0 & 0 & 1 & 1 & 1 & 1 & 0 & 0 & 0 & 0 & 1 & 1 & 1 & 1 & 0 & 0 & 0\\
  $\varepsilon_3$  & 0 & 0 & 0 & 0 & 0 & 0 & 0 & 1 & 1 & 1 & 1 & 1 & 1 & 1 & 1 & 0 & 0 & 0\\ 
  $\varepsilon_4$  & 0 & 0 & 0 & 0 & 0 & 0 & 0 & 0 & 0 & 0 & 0 & 0 & 0 & 0 & 0 & 1 & 1 & 1\\ 
   \hline
\end{tabular}
\end{center}
\end{table}   
\begin{table}[ht]
\caption{The binary elements $a_{ij}$ when $m=18$}
\begin{center} 
\begin{tabular}{rrrrrrrrrrrrrrrrrrrr}
$a_{i0}=$ &($a_i$,&1,&$a_i$,&1,&$a_i$,&1,&$a_i$,&1,&$a_i$,&1,&$a_i$,&1,&$a_i$,&1,&$a_i$,&1,&$a_i$,&1)\\
$a_{i1}=$ &(1,&$a_i$,&$a_i$,&1,&1,&$a_i$,&$a_i$,&1,&1,&$a_i$,&$a_i$,&1,&1,&$a_i$,&$a_i$,&1,&1,&$a_i$)\\
$a_{i2}=$ &(1,&1,&1,&$a_i$,&$a_i$,&$a_i$,&$a_i$,&1,&1,&1,&1,&$a_i$,&$a_i$,&$a_i$,&$a_i$,&1,&1,&1)\\
$a_{i3}=$ &(1,&1,&1,&1,&1,&1,&1,&$a_i$,&$a_i$,&$a_i$,&$a_i$,&$a_i$,&$a_i$,&$a_i$,&$a_i$,&1,&1,&1)\\
$a_{i4}=$ &(1,&1,&1,&1,&1,&1,&1,&1,&1,&1,&1,&1,&1,&1,&1,&$a_i$,&$a_i$,&$a_i$)\\
\end{tabular}
\end{center}
\end{table}  
 
The subgroup $B_1(m)<F^m$ is obtained from $B_0(m)$ by adjoining the {\em diagonal elements} $\delta_i:=(a_i,\dots,a_i)$,
$$B_1(m)= \langle B_0, \delta_1,\dots,\delta_r\rangle<F^m.$$
In our discussion of finiteness properties, the most natural ones to consider are finite presentation, type ${\rm{FP}}_k$
and {\em weak ${\rm{FP}}_k$} (written ${\rm{wFP}}_k$). A group
$G$ is of type ${\rm{FP}}_k$ if the trivial $\Z G$-module $\Z$ has a projective resolution in which the modules
up to dimension $k$ are finitely generated, and $G$ has type ${\rm{wFP}}_k$ if $H_i(G_0,\Z)$ is finitely generated for
 all $i\le k$ and all subgroups $G_0<G$ of finite index.  Finite presentation implies ${\rm{FP}}_2$ and
 ${\rm{FP}}_k$ implies ${\rm{wFP}}_k$.
 
We use the standard notation for the lower central series of a group,
$\gamma_1(G)=G$ and $\gamma_{c+1}(G)= [G, \gamma_c(G)]$.  We also use the standard term {\em rank}
and write $d(G)$ for the cardinality of a smallest generating set for $G$.

\begin{theorem} \label{t:main} For $r\ge 2, m\ge 3$,
\begin{enumerate}
\item the rank of {{$B_0(m)<F^m$}} is {{$r(\lfloor 1+\log_2 m\rfloor)$}};
\item  the rank of {{$B_1(m)<F^m$}} is {{$r(\lfloor 2+\log_2 m\rfloor)$}}; 
\item  {{$B_0(m)$}} contains {{$\gamma_{m-1}(F^m)$}};
\item  {{$B_1(m)$}} contains $\gamma_{c}(F^m)$, where
{{$c =  \lceil (m-1)/2\rceil$}};
\item  {{$B_0(m)$}} is {{finitely presented}} but {{not}} of type {{${\rm{wFP}}_3$}};
\item  if $m\le 4$ then {{$B_1(m)=F^m$}};
\item if $m\ge 5$ then {{$B_1(m)$}} is {{finitely presented}}, type ${\rm{wFP}}_3$ but {{not ${\rm{wFP}}_4$}}; 
\item  {{$\forall c\ \exists$ polynomial $p_c(t)$}} of degree $c$ such that if $m> p_c(\log_2 m)$ and $D_0<F^m$
is of finite index, then $ {{\gamma_c(D_0)\not\subseteq B_1(m)}}$.  
\end{enumerate}
\end{theorem}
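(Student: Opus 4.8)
The plan is to reduce the assertion to a rank estimate for subdirect products: I would show that if $S$ is a full subdirect product of $m$ copies of a non-abelian free group $F$ (of rank $r$) and $\gamma_c(H)^m\subseteq S$ for some finite-index normal subgroup $H\triangleleft F$, then $m\le q_c(d(S))$ for a polynomial $q_c$ of degree $c$ depending only on $c$ and $r$ (equivalently, $d(S)\gtrsim m^{1/c}$); applying this to $S=B_1(m)$ with $d(B_1(m))=r\lfloor 2+\log_2 m\rfloor$ from part (2) then gives the statement with $p_c(t):=q_c(r\lfloor 2+t\rfloor)$. To legitimise the reduction of hypothesis: any finite-index $D_0<F^m$ contains $H_1\times\cdots\times H_m$, where $H_i<F$ is the image of $D_0$ intersected with the $i$-th factor; replacing $H$ by the normal core of $\bigcap_iH_i$ one gets $\gamma_c(H)^m=\gamma_c(H^m)\subseteq\gamma_c(D_0)$ with $H\triangleleft F$ of finite index, so $\gamma_c(D_0)\subseteq B_1(m)$ would force $\gamma_c(H)^m\subseteq B_1(m)$. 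Note $B_1(m)$ is a full subdirect product of $F^m$: the diagonal elements $\delta_i$ already project onto each factor, and $B_1(m)$ meets each factor non-trivially by part (4).

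I would prove the rank estimate by induction on $c$, the mechanism being a first-homology computation through a central extension. For $c=1$ the hypothesis makes $S$ finite index in $F^m$, so by transfer $d(S)\ge\dim_\Q H_1(S;\Q)\ge\dim_\Q H_1(F^m;\Q)=rm$. For the inductive step, put $\Lambda:=F/[F,\gamma_c(H)]$ and $A:=\gamma_c(H)/[F,\gamma_c(H)]$; since $\gamma_c(H)\triangleleft F$, $A$ is a finitely generated abelian group, central in $\Lambda$, with $\Lambda/A=F/\gamma_c(H)$ virtually nilpotent of class $c-1$. A Chevalley--Weil/Gasch\"utz-type calculation shows that the torsion-free rank $t$ of $A$ is at least one: the regular representation of $F/H$ occurs inside $\gamma_c(H)/\gamma_{c+1}(H)\otimes\Q$ (which is the degree-$c$ part of the free Lie algebra on $H^{\mathrm{ab}}\otimes\Q\cong\Q\oplus\Q[F/H]^{r-1}$), and it survives to the $F/H$-coinvariants that compute $A\otimes\Q$. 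Because $[F,\gamma_c(H)]\subseteq\gamma_c(H)$, we have $[F,\gamma_c(H)]^m\subseteq\gamma_c(H)^m\subseteq S$, so the image $\widetilde S$ of $S$ in $\Lambda^m$ lies in a central extension $1\to A^m\to\widetilde S\to\bar S\to 1$, where $\bar S$ is the image of $S$ in $(F/\gamma_c(H))^m$ — again a full subdirect product, with $d(\bar S)\le d(\widetilde S)\le d(S)$. The five-term exact sequence of this central extension (with $\Q$-coefficients) yields
\[ d(S)\ \ge\ \dim_\Q H_1(\widetilde S;\Q)\ \ge\ tm-\dim_\Q H_2(\bar S;\Q). \]

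It then remains to bound $\dim_\Q H_2(\bar S;\Q)$ by a polynomial of degree $c-1$ in $d(\bar S)\le d(S)$; combined with the display this gives $tm\le d(S)+\dim_\Q H_2(\bar S;\Q)\le q_c(d(S))$ with $\deg q_c=c$, hence $m\le q_c(d(S))$, and plugging in $d(B_1(m))=r\lfloor 2+\log_2 m\rfloor$ finishes the proof. I expect this last bound on $H_2(\bar S;\Q)$ to be the main obstacle, and it cannot come from soft subdirect-product hypotheses alone: the diagonal copy of $F/\gamma_c(H)$ inside $(F/\gamma_c(H))^m$ is a full subdirect product with only $r$ generators and unbounded second homology, and it is \emph{not} a counterexample to the rank estimate only because it is not contained in $B_1(m)$ — indeed $B_1(m)$ contains $\gamma_{c-1}$ of no finite-index subgroup once $m$ is large, which is essentially what one is proving. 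So the bound on $H_2(\bar S;\Q)$ has to exploit the explicit binary structure of $B_1(m)$ — the fact that its generators, and with them the coordinate subgroups $L_k=B_1(m)\cap(\text{$k$-th factor})$, are pinned down by the rows of the binary array $A(m)$ — to run a genuine induction through all $c$ stages of the lower central series, each stage contributing one unit to the degree of $p_c$. Making this bookkeeping precise with the sharp degree is the technical heart of the argument.
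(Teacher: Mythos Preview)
Your proposal has a genuine gap, and you identify it yourself: you cannot bound $\dim_\Q H_2(\bar S;\Q)$ by a polynomial of degree $c-1$ in $d(\bar S)$, and your diagonal example shows that no such bound can hold for an arbitrary full subdirect product $\bar S$. You then conclude that the argument must exploit the explicit binary structure of $B_1(m)$ and leave this ``technical heart'' unfinished. That conclusion is mistaken. The rank estimate you are after --- if a $k$-generator subgroup $S<F^m$ contains $\gamma_c(D_0)$ for some finite-index $D_0<F^m$, then $m$ is bounded by a polynomial of degree $c$ in $k$ --- holds in full generality, with no use of the binary structure at all. Your inductive five-term-sequence mechanism is simply the wrong tool.

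The paper's argument is far more elementary. Map $F^m$ onto $N(2,c)^m$, where $N(2,c)=F_2/\gamma_{c+1}(F_2)$ is the free nilpotent group of rank $2$ and class $c$. The image $\overline S$ is a $k$-generator nilpotent group of class at most $c$, and it contains the image $\gamma_c(\overline{D_0})$, which has finite index in the free abelian group $(\gamma_c(F_2)/\gamma_{c+1}(F_2))^m$ of rank $mW_c(2)$. Now the key fact: every subgroup of a $k$-generator nilpotent group of class $c$ requires at most $h(k,c)=\sum_{i=1}^c W_i(k)$ generators (this is just the Hirsch length of $N(k,c)$, and the bound is inherited by quotients and subgroups). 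Hence $mW_c(2)\le h(k,c)$, which is polynomial of degree $c$ in $k$. Plugging in $k=d(B_1(m))=r\lfloor 2+\log_2 m\rfloor$ from part (2) gives the result; one may take $p_c(t)=\frac{1}{W_c(2)}\sum_{i=1}^c t^i$. The point you missed is that instead of climbing the lower central series one step at a time and tracking $H_2$, one should pass to the free nilpotent quotient in one go and use that subgroups of polycyclic groups are controlled by Hirsch length.
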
  

One can replace $F$ by a non-abelian limit group in this theorem -- see Theorem \ref{t:limit}.

\bigskip

\def\rkk{{\rm{rk}_F}}

The general binary subgroup $B(\Sigma)<F^m$ is obtained from $\Sigma=(\sigma_1,\dots,\sigma_r)$ by writing the binary expansions
of the elements of each $\sigma_i$ as the columns of an array, as above -- the $i$-th array $A_i$ will have $l_i$ rows where $l_i = \max\{\lfloor 1+ \log_2 x\rfloor \mid x\in\sigma_i\}$; the generators $a_{ij}\in B(\Sigma)$ are obtained by raising $a_i$ to the multi-index given by the $j$-th row of $A_i$ (see Section \ref{s:general} for more details).

Given any $m$-tuple of $r$-generator groups with markings
$(\mu_i:F\onto G_i)_{i=1}^m$, we take the image of  $B(\Sigma)<F^m$ under the
epimorphism $\mu = (\mu_1,\dots,\mu_m)$ to define $B(\Sigma,\mu)$. With this notation we have:

\begin{theorem}\label{t:general1} $B(\Sigma,\mu)<G_1\times\dots\times G_m$  contains the $(m-1)$st term
of the lower central series and is closed in the profinite topology. Moreover, there is an algorithm that, 
given $\Sigma,\mu$ and finite presentations of $G_1,\dots,G_m$, will construct a finite presentation for  $B(\Sigma,\mu)$. 
If the $G_i$ are limit groups, then there is an algorithm that will determine for each $k\ge 2$ whether 
$B(\Sigma,\mu)$ is of type $\wFP_k$.
\end{theorem}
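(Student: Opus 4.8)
The plan is to break Theorem~\ref{t:general1} into its four assertions and address each in turn, since they rest on rather different mechanisms.

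\medskip\noindent\textbf{Containing $\gamma_{m-1}$.}
First I would establish that $B(\Sigma)<F^m$ contains $\gamma_{m-1}(F^m)$; the statement for $B(\Sigma,\mu)$ then follows by applying the epimorphism $\mu$, since $\mu(\gamma_{m-1}(F^m))=\gamma_{m-1}(G_1\times\dots\times G_m)$. The key point is the combinatorial feature of binary expansions: for any pair of distinct coordinates $p\ne q$ in $\{1,\dots,m\}$ there is a row $j$ of the array $A_i$ on which $p$ and $q$ differ (because distinct integers have distinct binary expansions), so among the generators $a_{ij}$ one can find, for each basis element $a_i$ and each pair $p\ne q$, a generator that is ``$a_i$ in coordinate $p$ and $1$ in coordinate $q$'' (or vice versa). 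Commutators of such elements across the $m$ coordinates let one build up, via the standard three-subgroup/collection argument, every basic commutator of weight $m-1$ in the $a_i$'s living in a single coordinate while being trivial in another; iterating shows $\gamma_{m-1}(F^m)\subseteq B(\Sigma)$. This is essentially the content of Theorem~\ref{t:main}(3) proved in the general setting, so I would prove it once at the level of $B(\Sigma)$ and invoke it here.

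\medskip\noindent\textbf{Closure in the profinite topology.}
Since $B(\Sigma,\mu)$ contains $\gamma_{m-1}(G_1\times\dots\times G_m)$, and each $G_i$ (being a limit group, or at least a f.p.\ group in the general statement — here one needs each $G_i$ residually finite, which holds for limit groups and is what ``profinite topology'' presumes) has the property that $G_i/\gamma_{m-1}(G_i)$ is nilpotent finitely generated hence residually finite, the quotient $(G_1\times\dots\times G_m)/\gamma_{m-1}$ is residually finite. Therefore every subgroup of $G_1\times\dots\times G_m$ containing $\gamma_{m-1}$ — in particular $B(\Sigma,\mu)$ — is the preimage of a subgroup of a residually finite group, and a subgroup of a residually finite group that has the form: (subgroup containing a term of the lower central series whose quotient is virtually nilpotent) is separable. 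More precisely I would argue that $B(\Sigma,\mu)/\gamma_{m-1}$ is a finitely generated subgroup of a finitely generated virtually-nilpotent (hence LERF) group, so it is closed in the profinite topology of the quotient, and pulling back gives closure of $B(\Sigma,\mu)$ in $G_1\times\dots\times G_m$.

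\medskip\noindent\textbf{The algorithm for a finite presentation.}
Here the mechanism is the 1-2-3 Theorem (or the asymmetric version of it) together with the fact that the quotient $Q=(G_1\times\dots\times G_m)/B(\Sigma,\mu)$ is finitely presented: indeed $Q$ is a quotient of the finitely generated nilpotent group $(G_1\times\dots\times G_m)/\gamma_{m-1}$, hence is finitely presented, and a presentation for it can be computed from $\Sigma,\mu$ and presentations of the $G_i$ (one writes down generators of $G_1\times\dots\times G_m$, adds the relators killing all $a_{ij}$ — expressed explicitly from the arrays $A_i$ — and Tietze-transforms). Given finite presentations of the $G_i$, $G_1\times\dots\times G_m$ is explicitly finitely presented, and one has an explicit generating set for $B(\Sigma,\mu)$ (the images of the $a_{ij}$). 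The 1-2-3 Theorem then says that a finitely generated normal subgroup of a finitely presented group with finitely presented quotient is finitely presented, and its proof is effective: it produces a finite set of relators for the subgroup from the given data. I would cite the effective form of the 1-2-3 Theorem from \cite{BHMS1} (or its antecedents) and note that all the inputs — generators of $B(\Sigma,\mu)$, a presentation of $G_1\times\dots\times G_m$, a presentation of the quotient — are computable here; assembling these gives the algorithm.

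\medskip\noindent\textbf{Deciding $\wFP_k$ for limit groups.}
This is the part I expect to be the main obstacle, and it is where the VSP Theorem of \cite{BHMS2} enters. When the $G_i$ are non-abelian limit groups, $B(\Sigma,\mu)$ is a full subdirect product (the projections are onto by construction since the $a_{ij}$ collectively surject, and it meets each factor because it contains $\gamma_{m-1}$), so the VSP Theorem applies and the finiteness properties $\wFP_k$ are governed by a computable invariant — roughly, which sub-products of the $G_i$ the relevant ``diagonal'' data sees, or equivalently by the structure of the associated quotient and the homology of finite-index subgroups of products of limit groups, which is controlled by the VSP machinery. Concretely I would: (i) use the algorithms of \cite{BHMS2} that, given a full subdirect product of limit groups presented by generators, compute the canonical finite-index subgroup $D_0$, the co-nilpotency data, and the relevant homology ranks $H_i(G_0,\Z)$ for finite-index $G_0$; (ii) observe that for $B(\Sigma,\mu)$ all of this input is explicitly computable from $\Sigma$, $\mu$ and presentations of the $G_i$ (via the finite presentation produced in the previous part); (iii) feed this into the VSP-based criterion for $\wFP_k$. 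The delicate point is confirming that the hypotheses of the relevant theorems from \cite{BHMS2} (finite presentability, full subdirectness, the $G_i$ being limit groups) are all met and that the invariants are not merely abstractly defined but algorithmically accessible from the explicit binary data; once that bookkeeping is done, the decision procedure for each fixed $k\ge 2$ follows.
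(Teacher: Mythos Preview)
Your approach has a genuine gap in the finite-presentation step: $B(\Sigma,\mu)$ is \emph{not} a normal subgroup of $G_1\times\dots\times G_m$, so the quotient $Q=(G_1\times\dots\times G_m)/B(\Sigma,\mu)$ does not exist as a group, and the 1--2--3 Theorem (which concerns short exact sequences) does not apply. The fact that $B(\Sigma,\mu)$ contains $\gamma_{m-1}(D)$ makes its image in the nilpotent group $D/\gamma_{m-1}(D)$ a subgroup, but not a normal one, so there is no finitely presented ``quotient'' to feed into your argument. This is exactly why the VSP Theorem (Theorem~\ref{t:VSP}) is needed: it is designed for subgroups that merely surject pairs of factors, with no normality hypothesis, and its finite-presentability clause (with effective algorithm) is substantially deeper than the 1--2--3 Theorem.

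The paper's route is therefore different and more unified than yours. One first checks, by the elementary argument of Section~\ref{s:not-triples}, that $B(\Sigma)$ surjects every pair of factors $L_p\times L_q$ (your sketch for $\gamma_{m-1}$ stops short of this: observing that columns $p$ and $q$ differ in some row gives only one of $(a_i,1)$ or $(1,a_i)$, and you need surjection onto the pair, which requires a second row as in Section~\ref{s:not-triples}). This property is preserved under push-forward (Lemma~\ref{l:push-and-pull}), and then Theorem~\ref{t:VSP} delivers simultaneously the containment of $\gamma_{m-1}$, profinite closure, and the algorithmic finite presentation. Your separate argument for profinite closure via LERF of finitely generated nilpotent groups is correct and is essentially how item~(3) of the VSP Theorem is proved, but it is not an independent route to the other conclusions.

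For the $\wFP_k$ decision procedure your outline is far more elaborate than necessary. Once one knows $B(\Sigma,\mu)$ surjects pairs, Corollary~\ref{c:H_1enough} reduces the question ``does $B(\Sigma,\mu)$ virtually surject every $k$-tuple of factors?'' to checking surjectivity onto the abelianisations $H_1(G_{i_1}\times\dots\times G_{i_k},\Z)$, which is pure linear algebra over $\Z$ with explicit input (Remark~\ref{r:algo}). Theorem~\ref{t:KK} then says this is equivalent to $\wFP_k$ when the $G_i$ are limit groups. No computation of co-nilpotency data or homology of finite-index subgroups is required.
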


The passage from $B(\Sigma,\mu)$ to  $B(\Sigma)<F^m$ is an example of how one can push forward subdirect products:
see Sections \ref{s:push-and-pull} and \ref{s:5.2}, where we also examine the results of pulling back subgroups.

\subsection{Economical generation}
Philip Hall \cite{hall} initiated the study of economical generating sets for direct powers of finite perfect groups. The
theory was subsequently developed extensively by Jim Wiegold \cite{wiegold} and others, and elements of it were extended to 
cover infinite groups \cite{WW}: if $G$ is finitely generated and perfect, then $G^m$ requires at most $O(\log m)$
generators.
Theorems \ref{t:main} and \ref{t:general1} allow us to extend this statement about $d(G^m)$ to direct products of distinct groups. 
For if $G_1,\dots, G_m$ each  require at most $r$ generators,  then by pushing forward $B_0(m) <F^m$
we obtain a subgroup $B< G_1\times\dots\times G_m$ with $d(B)\le r \lfloor 1 + \log_2 m\rfloor$ that contains
a term of the lower central series of the product, and if the $G_i$ are perfect then each term of the lower central
series is equal to the entire product.
 
\begin{corollary}\label{c:hall}
If $G_1,\dots,G_m$ are perfect and $d(G_i) \le r$, then
$ d(G_1\times\dots\times G_m) \le r \lfloor 1 + \log_2 m\rfloor .$
\end{corollary}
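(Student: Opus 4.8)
The plan is to push the subgroup $B_0(m)<F^m$ forward along a marking epimorphism and then read off the conclusion from Theorem~\ref{t:main}(3).

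First I would fix surjections $\mu_i:F\onto G_i$, which exist because $d(G_i)\le r$, and assemble $\mu=(\mu_1,\dots,\mu_m):F^m\onto G_1\times\dots\times G_m$. Set $B:=\mu(B_0(m))$; this is the binary subgroup $B(\Sigma,\mu)$ for the tuple $\Sigma$ with every $\sigma_i=\{1,\dots,m\}$. Since $B_0(m)$ is generated by the $r\lfloor 1+\log_2 m\rfloor$ binary elements $a_{ij}$, its image $B$ is generated by the $r\lfloor 1+\log_2 m\rfloor$ elements $\mu(a_{ij})$, so $d(B)\le r\lfloor 1+\log_2 m\rfloor$.

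Next I would show that $B$ is the whole product. By Theorem~\ref{t:main}(3), $B_0(m)\supseteq\gamma_{m-1}(F^m)$, and a surjective homomorphism carries $\gamma_c$ of the source onto $\gamma_c$ of the target (induct on $c$, using that commutators generate these terms); therefore
\[
B=\mu(B_0(m))\ \supseteq\ \mu\bigl(\gamma_{m-1}(F^m)\bigr)=\gamma_{m-1}(G_1\times\dots\times G_m).
\]
The lower central series commutes with finite direct products, so $\gamma_{m-1}(G_1\times\dots\times G_m)=\gamma_{m-1}(G_1)\times\dots\times\gamma_{m-1}(G_m)$, and each factor equals $G_i$ because $G_i$ is perfect: from $\gamma_2(G_i)=[G_i,G_i]=G_i$ an immediate induction gives $\gamma_c(G_i)=G_i$ for every $c$. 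Hence $\gamma_{m-1}(G_1\times\dots\times G_m)=G_1\times\dots\times G_m$, which forces $B=G_1\times\dots\times G_m$. Combining this with the generator count of the previous paragraph yields $d(G_1\times\dots\times G_m)\le r\lfloor 1+\log_2 m\rfloor$.

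I do not expect a genuine obstacle here: all the weight sits in Theorem~\ref{t:main}(3), while the remaining ingredients -- images of generators generate the image, surjections preserve terms of the lower central series, $\gamma_c$ distributes over finite direct products, and perfect groups coincide with each of their lower central terms -- are standard and need at most a sentence apiece. The only side point worth a remark is that, although Theorem~\ref{t:main} is stated for $m\ge 3$, the construction already gives $B_0(m)=F^m$ when $m\le 2$, so the stated bound in fact holds for every $m$.
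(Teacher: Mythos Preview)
Your argument is correct and is exactly the approach the paper takes: push $B_0(m)$ forward along $\mu$, invoke Theorem~\ref{t:main}(3) to see that the image contains $\gamma_{m-1}$ of the product, and use perfectness of the $G_i$ to conclude that this is the whole product. Your added remark handling $m\le 2$ is a nice clarification, since the paper's Theorem~\ref{t:main} is only stated for $m\ge 3$.
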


For direct powers of finitely presented superperfect groups, the logarithmic growth of $d(G^m)$ can be promoted to a 
polylogarithmic bound on the number of relators needed to present $G^m$ (see \cite{B:concise}), but we do not have
such a result for more general direct products.

\subsection{Economical embeddings of residually free groups} In  \cite{BHMS2} an algorithm is established  that,
given a finite presentation of a residually free group $G$, will construct a canonical embedding of $G$ into its
{\em existential envelope}  $\eenv (G)$,  which is a direct product of finitely many limit groups (see Section \ref{s:limit} below for 
definitions). If $G$ is
centerless, then
the number of direct factors in $\eenv (G)$ is the product rank of $G$, i.e. the largest integer $\rkk(G)$ such that $G$ contains 
a direct power of non-abelian free groups $F_2^{\rkk(G)}$.
If a direct product of finitely many limit groups contains $G$ then it contains $\eenv (G)$ and has at least $\rkk(G)$ direct factors.

In \cite{ashot}, Ashot Minasyan proved that if $d(G)=k$ then $\eenv (G)$  is a product of at most $\exp(C_\e k^{2+\e})$
limit groups, where $\e>0$ is arbitrary and $C_\e$ is a constant that goes to infinity with $\e$.
He asked if this upper
bound could be improved to a polynomial in $k$. Theorem \ref{t:main} answers his question in the negative: one cannot do better than an exponential function of $k$. To see this, note that parts (3) and (4) of the theorem imply that
$B_0(m)$ and $B_1(m)$ both have product rank $m$.

\begin{corollary}\label{c:ashot}
There exist sequences of finitely presented, centerless, residually-free groups $B(m)$ with 
product rank $\rkk(B(m))=m$ requiring only $d(B(m)) = O(\log m)$ generators.
\end{corollary}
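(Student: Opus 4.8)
The plan is to take $B(m) := B_0(m)$ with the rank parameter fixed at $r = 2$ and to extract the four asserted properties directly from Theorem \ref{t:main}. Parts (1) and (5) give at once that each $B_0(m)$ is finitely presented and has $d(B_0(m)) = 2\lfloor 1 + \log_2 m\rfloor = O(\log m)$. Residual freeness is equally immediate: $B_0(m)$ is by construction a subgroup of $F^m = F_2^m$, a direct product of free groups, and every subgroup of such a product is residually free. So the only points needing an argument are the value of the product rank and the centrality of the trivial subgroup, and I would derive both from the single input, part (3), that $B_0(m) \supseteq \gamma_{m-1}(F^m)$.

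For the product rank I would bound $\rkk(B_0(m))$ on both sides. The lower central series of a direct product splits coordinatewise, so $\gamma_{m-1}(F^m) = \gamma_{m-1}(F)^m$; for $m \ge 3$ the group $\gamma_{m-1}(F)$ is a nontrivial normal subgroup of infinite index in the non-abelian free group $F$, hence itself free of infinite rank, and in particular contains a copy of $F_2$. Therefore $B_0(m) \supseteq \gamma_{m-1}(F)^m \supseteq F_2^m$, giving $\rkk(B_0(m)) \ge m$. For the reverse inequality, $F^m$ is a direct product of $m$ limit groups containing $B_0(m)$, so by the property of existential envelopes recalled in Section \ref{s:limit} it has at least $\rkk(B_0(m))$ direct factors; hence $\rkk(B_0(m)) = m$. (Alternatively, $\mathrm{cd}(F_2^{m+1}) = m+1 > m = \mathrm{cd}(F^m)$, so $F_2^{m+1}$ cannot embed in $F^m$.)

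For centerlessness, any element of $Z(B_0(m))$ centralizes $\gamma_{m-1}(F^m) = \gamma_{m-1}(F)^m$ and hence lies in $\prod_{i=1}^m C_F(\gamma_{m-1}(F))$, so it suffices to check that $C_F(\gamma_{m-1}(F)) = 1$ for $m \ge 3$. If a nontrivial $g \in F$ centralized $\gamma_{m-1}(F)$, then $\gamma_{m-1}(F)$ would be contained in the cyclic group $C_F(g)$, contradicting the infinite rank of $\gamma_{m-1}(F)$; so indeed $Z(B_0(m)) = 1$. Assembling these facts establishes the corollary with $B(m) = B_0(m)$ for $m \ge 3$.

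There is no genuine obstacle here: the corollary is essentially a bookkeeping consequence of Theorem \ref{t:main}, and the auxiliary inputs — the coordinatewise splitting of the lower central series, the infinite rank of a nontrivial infinite-index normal subgroup of a free group, and the fact that a product of $m$ limit groups has product rank exactly $m$ — are all standard. The same argument applies verbatim to $B_1(m)$ for $m \ge 5$, using parts (2), (4) and (7) of Theorem \ref{t:main} in place of parts (1), (3) and (5).
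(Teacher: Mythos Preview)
Your proof is correct and follows essentially the same approach as the paper. The paper's argument is a single sentence preceding the corollary (``note that parts (3) and (4) of the theorem imply that $B_0(m)$ and $B_1(m)$ both have product rank $m$''), and you have simply unpacked this, together with the implicit claims of centerlessness and residual freeness, in a careful and accurate way; your use of $\gamma_{m-1}(F^m) = \gamma_{m-1}(F)^m \supseteq F_2^m$ for the lower bound on product rank and the envelope/cohomological-dimension argument for the upper bound are exactly what the paper has in mind.
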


This article is organised as follows. In Section \ref{s:background} we gather some basic facts about 
nilpotent groups and subdirect products, and state the VSP Theorem.
In Section \ref{s:limit} we recall some elements of the theory of residually free groups and present the results
that we need 
from \cite{BHMS1, BHMS2, Koch, Kuckuck} relating finiteness properties of subdirect products to coordinate projections.
Section \ref{s:proof} contains the proof of Theorem \ref{t:main}. In Section \ref{s:general} we discuss
the general construction of binary subgroups, the process of pushing them forward and
pulling them back, and certain redundancies in the general construction.
 In the final section we discuss possible variations, open questions, and directions for future work, including an
 enticing connection with coding theory.
\bigskip
  
\noindent{\bf Acknowedgements:} I thank Ben Green and Yonathan Fruchter for 
helpful discussions concerning the connection to coding theory described in Section 
\ref{s:last}.
I thank Claudio Llosa Isenrich for encouraging comments and for drawing my attention to Ashot Minasyan's question  \cite{ashot}. And  
I thank my co-authors from \cite{BHMS1, BHMS2}, Jim Howie, Chuck Miller and Hamish Short, for many years of stimulating 
and enjoyable conversations about subdirect products of limit groups.

\section{Preliminaries}\label{s:background}

\subsection{Nilpotent groups} With the exception of Proposition \ref{p:poly}, the material in this section is standard
and can be found in Chapter 11 of the classical reference  \cite{marshall-hall}.
The terms of the lower central series of a group $G$ are defined
by $\gamma_1(G)=G$ and $\gamma_{n+1}(G) = [G, \gamma_n(G)]$. If
$\gamma_{c+1}(G)=1$ but $\gamma_c(G)\neq 1$ then $G$ is said to be {\em nilpotent of class $c$}.  
The {\em free nilpotent group} of class $c$ and rank $k$ is $N(k,c):=F_k/\gamma_{c+1}(F_k)$; every 
$k$-generator nilpotent group of class at most $c$ is a quotient of this group.

If $\{a_1,\dots,a_k\}$ is a basis for $F_k$, then the basic commutators
$[a_{i_1},\dots,a_{i_n}]$ of weight $n$  project under the natural map $F_k\to F_k/\gamma_{n+1}(F_k)$
to a basis of the free abelian group $\gamma_n(F_k)/\gamma_{n+1}(F_k)$, the rank of which  is given by the Witt formula
$$
W_n(k) = \frac{1}{n} \sum_{d|n} \mu(d) k^{n/d}
$$
where $\mu(d)$ is the M\"{o}bius function.  
By making repeated use of this observation, we see that $N(k,c)$ is a polycyclic group whose 
Hirsch length (cohomological dimension) is $h(k,c) = \sum_{i=1}^c W_i(k)$.  
A simple induction then shows that every subgroup of $N(k,c)$ requires at most $h(k,c) $ generators.  
The crude approximation  $W_n(k) \le k^n$ gives $h(k,c)\le \sum_{i=1}^c k^i$, which is all that we shall need for the
polynomial in the last item of Theorem \ref{t:main}.

\begin{lemma} \label{l2}
If $H$ is a subgroup of a $k$-generator nilpotent group $G$ of class at most $c$, then $d(H)\le h(k,c)$. 
\end{lemma}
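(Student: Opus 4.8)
The plan is to reduce the statement to the single assertion flagged just before the lemma, namely that every subgroup of the free nilpotent group $N(k,c)$ is generated by at most $h(k,c)$ elements. First I would note that, since $G$ is generated by $k$ elements and is nilpotent of class at most $c$, the universal property of $N(k,c)$ supplies an epimorphism $\pi\colon N(k,c)\onto G$. Given $H<G$, put $\tilde H=\pi^{-1}(H)<N(k,c)$; then $\pi$ restricts to an epimorphism $\tilde H\onto H$, so $d(H)\le d(\tilde H)$. Hence it is enough to bound $d(\tilde H)$, and therefore enough to prove the claim for arbitrary subgroups of $N(k,c)$.

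Next I would make the polycyclic structure of $N(k,c)$ explicit. By the Witt-formula discussion above, the basic commutators of weight $i$ give a basis for the free abelian group $\gamma_i(F_k)/\gamma_{i+1}(F_k)$, which has rank $W_i(k)$; passing to $N(k,c)=F_k/\gamma_{c+1}(F_k)$ and writing $\gamma_i$ for the image of $\gamma_i(F_k)$, the lower central series
\[
N(k,c)=\gamma_1\triangleright\gamma_2\triangleright\cdots\triangleright\gamma_c\triangleright\gamma_{c+1}=1
\]
has free abelian factors $\gamma_i/\gamma_{i+1}$ of rank $W_i(k)$. Refining each of these factors through a chain of infinite cyclic quotients produces a normal series for $N(k,c)$ of total length $\sum_{i=1}^c W_i(k)=h(k,c)$, all of whose factors are (infinite) cyclic.

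The remaining ingredient is the elementary fact that if a group $P$ admits a subnormal series $1=P_0\triangleleft P_1\triangleleft\cdots\triangleleft P_\ell=P$ with each $P_j/P_{j-1}$ cyclic, then every subgroup of $P$ is generated by at most $\ell$ elements. I would prove this by induction on $\ell$, the case $\ell=0$ being trivial. For a subgroup $Q<P$, the intersection $Q\cap P_{\ell-1}$ lies in $P_{\ell-1}$, which inherits a subnormal series of length $\ell-1$ with cyclic factors, so $d(Q\cap P_{\ell-1})\le\ell-1$ by induction; meanwhile $Q/(Q\cap P_{\ell-1})\cong QP_{\ell-1}/P_{\ell-1}$ is a subgroup of the cyclic group $P/P_{\ell-1}$, hence cyclic, so adjoining one further element of $Q$ to generators of $Q\cap P_{\ell-1}$ yields a generating set of size at most $\ell$. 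Applying this with $P=N(k,c)$ and $\ell=h(k,c)$ gives $d(\tilde H)\le h(k,c)$, and hence $d(H)\le h(k,c)$, as required.

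There is no genuine obstacle here; the argument is precisely the "simple induction" alluded to in the text. The only point that calls for a little care is ensuring that the series exhibited for $N(k,c)$ has genuinely \emph{cyclic} — not merely abelian — factors, which is the reason for the refinement step, and that subgroups inherit such a series, which is handled by the intersection argument in the induction.
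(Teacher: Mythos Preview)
Your argument is correct and is essentially the same as the paper's own proof: the paper reduces to $N(k,c)$ via the epimorphism and preimage, then appeals to the polycyclic structure of Hirsch length $h(k,c)$ and the ``simple induction'' already flagged in the text; you have simply made that induction explicit.
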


\begin{proof}
By choosing an epimorphism  $N(k,c)\onto G$  and replacing $H$ with its preimage in $N(k,c)$ we may assume that
 $G=N(k,c)$, and this case is covered by the preceding discussion. 
\end{proof}

The following is the main output that we require from this subsection.

\begin{proposition} \label{p:poly}
Let $\Lambda_1,\dots,\Lambda_m$ be finitely generated groups each of which  
maps onto a non-abelian free group. Suppose $D=\Lambda_1\times\dots\times\Lambda_m$ has a $k$-generator
subgroup $S$ that contains $\gamma_c(D_0)$ where $D_0<D$ is a subgroup of finite
index. Then $m\le  h(k,c)/W_c(2)$.
\end{proposition}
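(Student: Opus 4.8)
The plan is to reduce to the case $D = F_2^m$, the $m$-th power of a free group of rank $2$, and then to push the whole picture into a finitely generated nilpotent quotient in which the top term of the lower central series is free abelian of rank proportional to $m$; the subgroup $S$, having only $k$ generators, cannot ``see'' too large a free abelian subgroup, and Lemma \ref{l2} quantifies exactly this.

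\emph{Reduction to $F_2^m$.} First I would pick epimorphisms $q_i\colon\Lambda_i\onto F_2$ (compose the given map onto a non-abelian free group with a surjection onto $F_2$) and set $q = q_1\times\cdots\times q_m\colon D\onto F_2^m$. Then $q(S)$ is generated by at most $k$ elements, $L:=q(D_0)$ is a finite-index subgroup of $F_2^m$, and $q(S)\supseteq q(\gamma_c(D_0)) = \gamma_c(L)$. So it suffices to treat $G:=F_2^m$ with a subgroup $S$ satisfying $d(S)\le k$ and $\gamma_c(L)\le S$ for some finite-index $L\le G$.

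\emph{Passing to a nilpotent quotient.} Let $\pi\colon G\to Q:=G/\gamma_{c+1}(G)$ be the quotient map. Since $\gamma_n$ of a direct product is the product of the $\gamma_n$'s, $Q\cong\big(F_2/\gamma_{c+1}(F_2)\big)^m$, a finitely generated nilpotent group of class at most $c$, and its top lower central term is $\gamma_c(Q)\cong\big(\gamma_c(F_2)/\gamma_{c+1}(F_2)\big)^m\cong\Z^{\,mW_c(2)}$ by the Witt formula. Put $H:=\pi(S)$; this is a $k$-generated nilpotent group of class $\le c$, so by Lemma \ref{l2} every subgroup of $H$ is generated by at most $h(k,c)$ elements. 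The heart of the matter is the claim that $\pi(\gamma_c(L))$ has finite index in $\gamma_c(Q)$. Granting this, note that $\pi(\gamma_c(L))\subseteq H\cap\gamma_c(Q)$, because $\gamma_c(L)\le S$ and $\gamma_c(L)\le\gamma_c(G)$; hence $H\cap\gamma_c(Q)$ is a finite-index subgroup of $\gamma_c(Q)\cong\Z^{\,mW_c(2)}$, so it is free abelian of rank exactly $mW_c(2)$, and Lemma \ref{l2} forces $mW_c(2)\le h(k,c)$, which is the assertion.

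\emph{The expected obstacle.} What must be proved is that $\pi(\gamma_c(L))$ has finite index in $\gamma_c(Q)$, and here nilpotency is genuinely used: in a free group, $\gamma_c$ of a proper finite-index subgroup is \emph{not} of finite index in $\gamma_c$ once $c\ge 2$, so one cannot argue before quotienting by $\gamma_{c+1}(G)$. To establish it, replace $L$ by its normal core $L_0\trianglelefteq G$ (still finite index, and $\gamma_c(L_0)\le\gamma_c(L)$), so that $[G:L_0]=N$ gives $x^N\in L_0$ for every $x\in G$. For a weight-$c$ commutator $\chi=[x_1,\dots,x_c]$ with each $x_i$ a standard generator of $G$, the collection formula yields $[x_1^{N},\dots,x_c^{N}]\equiv\chi^{N^c}\pmod{\gamma_{c+1}(G)}$; the left-hand side lies in $\gamma_c(L_0)$, so $\pi(\chi)^{N^c}\in\pi(\gamma_c(L_0))$. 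Since the elements $\pi(\chi)$ generate the finitely generated abelian group $\gamma_c(Q)$, it follows that $\gamma_c(Q)^{N^c}\subseteq\pi(\gamma_c(L_0))$, and the former has finite index. The only nontrivial ingredient is this commutator collection identity, which is exactly the kind of computation found in Chapter~11 of \cite{marshall-hall}.
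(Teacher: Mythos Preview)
Your proof is correct and follows essentially the same route as the paper: reduce to $F_2^m$ via the given surjections, pass to the free nilpotent quotient $N(2,c)^m$, and then compare the rank of the free abelian group $\gamma_c(N(2,c))^m\cong\Z^{mW_c(2)}$ against the bound $h(k,c)$ furnished by Lemma~\ref{l2} applied to the image of $S$. The only difference is one of detail: the paper simply asserts that $\gamma_c(\overline{D}_0)$ has finite index in $\gamma_c(N(2,c))^m$, whereas you supply an explicit justification via the normal core and the multilinearity identity $[x_1^N,\dots,x_c^N]\equiv [x_1,\dots,x_c]^{N^c}\pmod{\gamma_{c+1}}$.
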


\begin{proof}  
The hypothesis yields a surjection $D\onto F_2^m$ which maps  $\gamma_i(D)$ onto $\gamma_i(F_2)^m$.
We compose $D\onto F_2^m$ 
with the canonical surjection  $F_2^m\onto  N(2,c)^m$. 
Let $\overline{S}$ and $\overline{D}_0$ be the images of $S$ and $D_0$ under this composition. Then $d(\overline{S})\le k$ and
$\overline{S}$ is a nilpotent group of class at most $c$ containing $\gamma_c(\overline{D}_0)$. The latter has finite index 
in  the $m$-th direct power of $\gamma_{c}(F_2)/\gamma_{c+1}(F_2)$,
which is a free abelian group of rank $mW_c(2)$, hence $d(\gamma_c(\overline{D}_0)) = mW_c(2)$.  Therefore
$
m W_c(2) \le h(k,c),
$
by Lemma \ref{l2}.
\end{proof}

The following well-known facts are proved by induction on the nilpotency class.

\begin{lemma}\label{l:ab-enough}
Let $N$ be a finitely generated nilpotent group. A subset $\Theta\subset N$ generates $N$ if and only if
the image of $\Theta$ in the abelianization $N^{\rm{ab}}=H_1(N,\Z)$ generates $N^{\rm{ab}}$.
\end{lemma}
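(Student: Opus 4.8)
The plan is to prove the non-trivial implication by induction on the nilpotency class $c$ of $N$; the forward implication is immediate, since a generating set of $N$ maps onto a generating set of every quotient, in particular of $N^{\rm{ab}}$. For the base case $c=1$ the group $N$ is abelian and there is nothing to prove.

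For the inductive step I would assume the statement for nilpotent groups of class $<c$ and take $N$ of class $c\ge 2$. Set $Z:=\gamma_c(N)$, the last non-trivial term of the lower central series. The two structural facts I will use are that $Z$ is central in $N$ and that $Z\subseteq[N,N]$: the second ensures that the quotient map $N\to\bar N:=N/Z$ induces an isomorphism $N^{\rm{ab}}\cong\bar N^{\rm{ab}}$, while $\bar N$ is nilpotent of class $c-1$. Since the image of $\Theta$ generates $N^{\rm{ab}}$ by hypothesis, the image $\bar\Theta$ of $\Theta$ in $\bar N$ generates $\bar N^{\rm{ab}}$, so the inductive hypothesis gives that $\bar\Theta$ generates $\bar N$. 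Writing $H:=\langle\Theta\rangle\le N$, this says precisely that $HZ=N$.

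The remaining step — upgrading $HZ=N$ to $H=N$ — is where the centrality of $Z$ is exploited. Because $Z$ is central, a commutator of two elements of $HZ$ equals the commutator of their $H$-components, so $\gamma_i(HZ)=\gamma_i(H)$ for all $i\ge 2$ by a straightforward induction on $i$. Applying this with $i=c$ gives $Z=\gamma_c(N)=\gamma_c(HZ)=\gamma_c(H)\subseteq H$, and hence $N=HZ=H$, as required.

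I do not expect a serious obstacle here; the one point to get right is to pass to the quotient by the last term $\gamma_c(N)$ rather than by $[N,N]$ — this is exactly what keeps the abelianization unchanged while making a central subgroup available for the commutator computation in the final step. (Finite generation of $N$ is in fact not needed for this argument, although it is the only setting used in the paper.)
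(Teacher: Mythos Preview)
Your proof is correct and follows exactly the approach the paper indicates, namely induction on the nilpotency class; the paper does not spell out the details, so your argument (quotienting by $\gamma_c(N)$, applying the inductive hypothesis, then using centrality to get $\gamma_c(HZ)=\gamma_c(H)\subseteq H$) is a standard and entirely appropriate way to carry this out. Your closing observation that finite generation is not actually used is also correct.
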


\begin{lemma}\label{l:tor-finite} If a nilpotent group $N$ is generated by a finite set of elements of finite order, then
$N$ is finite.
\end{lemma}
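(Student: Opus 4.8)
The plan is to argue by induction on the nilpotency class $c$ of $N$, in the same spirit as the other structural facts recorded in this subsection. When $c\le 1$ the group $N$ is a finitely generated abelian group that is generated by elements of finite order, hence a finite direct sum of finite cyclic groups, and there is nothing more to prove. So suppose $c\ge 2$ and let $X\subset N$ be a finite generating set consisting of elements of finite order. The first step is to pass to the quotient $\overline N:=N/\gamma_c(N)$: this group is nilpotent of class at most $c-1$ and is generated by the images of the elements of $X$, which again have finite order, so by the inductive hypothesis $\overline N$ is finite. It then remains to prove that $\gamma_c(N)$ is finite, because an extension of a finite group by a finite group is finite.

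To control $\gamma_c(N)$ I would first note two easy points. It is finitely generated: $N$ is a finitely generated nilpotent group, so Lemma~\ref{l2} bounds the rank of any subgroup of $N$, in particular of $\gamma_c(N)$. And it is central in $N$, since $[\gamma_c(N),N]=\gamma_{c+1}(N)=1$; in particular it is abelian. Thus $\gamma_c(N)$ is a finitely generated abelian group, and it suffices to show that it is a torsion group. For this, recall that $\gamma_c(N)=\gamma_c(N)/\gamma_{c+1}(N)$ is generated by the images of the left-normed iterated commutators $[x_{i_1},\dots,x_{i_c}]$ with every $x_{i_j}\in X$; using a commutator identity of the form $[w,v^n]\equiv[w,v]^n$ modulo the next term of the lower central series (valid because the relevant bracket map is bilinear on the associated graded Lie ring), one sees that the image of such a commutator in $\gamma_c(N)/\gamma_{c+1}(N)$ is annihilated by the order of its last entry $x_{i_c}$. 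Hence $\gamma_c(N)$ is a finitely generated torsion abelian group, so it is finite, and we are done. (Alternatively one can phrase the torsion step as: $N^{\rm{ab}}=H_1(N,\Z)$ is finitely generated and generated by torsion images of $X$, hence finite, and the standard surjection $(N^{\rm{ab}})^{\otimes c}\onto\gamma_c(N)/\gamma_{c+1}(N)$ then forces $\gamma_c(N)$ to be finite; cf. Lemma~\ref{l:ab-enough}.)

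The reduction to $\overline N$ is immediate, so the one place that requires genuine care is precisely the claim that $\gamma_c(N)$ itself is torsion: a priori, new elements of infinite order could appear at the top of the lower central series, and ruling this out is where one must invoke the bilinearity of the commutator map on the associated graded rather than anything purely formal. Once $\gamma_c(N)$ is known to be finitely generated, abelian and torsion, the rest of the argument is routine.
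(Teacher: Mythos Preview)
Your proof is correct and follows exactly the route the paper indicates: the paper does not give a detailed argument but simply records that this fact (along with Lemma~\ref{l:ab-enough}) ``is proved by induction on the nilpotency class,'' which is precisely the scheme you carry out. Your two variants for showing $\gamma_c(N)$ is torsion --- via bilinearity of the bracket on the associated graded, or via the surjection $(N^{\rm ab})^{\otimes c}\onto\gamma_c(N)/\gamma_{c+1}(N)$ --- are both standard realisations of this induction.
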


\subsection{Subdirect Products and the VSP Theorem} 
A subgroup of a direct product $S< G_1\times\dots\times G_m$ is 
termed a {\em{subdirect product}} if $S$ projects onto each $G_i$; it is  
a {\em{full}} subdirect product if, in addition, $S\cap G_i\neq 1$ for all $i$. If one wishes to understand the finitely generated
subgroups of direct products of groups drawn from a class that is closed under the taking of finitely generated subgroups,
then one can reduce to the study of full subdirect products by projecting away from factors where $S\cap G_i= 1$
and replacing each $G_j$ by the projection of $S$ to $G_j$; free groups form one such class and limit groups form another.

We refer the reader to \cite{BM} for a more thorough account of subdirect products.

The results of the present paper rely heavily on the following theorem from \cite{BHMS2}.

\begin{theorem}[The VSP Theorem \cite{BHMS2}] \label{t:VSP}
Let $S<D=G_1\times\dots\times G_m$ be a
subgroup of a direct product of finitely presented groups. If the projection $p_{ij}(S)<G_i\times G_j$ has
finite index for all $1\le i<j\le m$, then 
\begin{enumerate}
\item $S$ is finitely presented;
\item $\gamma_{m-1}(D_0)<S$ for  some $D_0<D$ of finite index;
\item $S$ is closed in the profinite topology on $D$.
\end{enumerate}
Moreover, there is an algorithm that, 
given finite presentations of the groups $G_i$ and a finite set
$\Theta\subset D$ generating $S$, will construct a finite presentation $\langle\Theta\mid R\rangle$ for $S$. 
\end{theorem}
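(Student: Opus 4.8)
The plan is to establish the four assertions together by induction on $m$. The case $m=2$ is immediate: the single hypothesis then says $S=p_{12}(S)$ has finite index in $D=G_1\times G_2$, so $S$ is finitely presented (Reidemeister--Schreier), $S=\gamma_1(S)$ with $S<D$ of finite index, and a finite-index subgroup is closed in the profinite topology; the required algorithm is again Reidemeister--Schreier. Before the inductive step one performs two reductions. First, replace each $G_i$ by the finite-index -- hence still finitely presented -- subgroup $p_i(S)$, and then, if $S\cap G_i=1$ for some $i$, project that factor away and invoke the inductive hypothesis inside $D/G_i$; so we may assume $S$ is a full subdirect product with all coordinate projections onto. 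Second, by passing to a suitable finite-index subgroup of $D$, upgrade ``virtually surjects to pairs'' to ``surjects to pairs'' -- it is here, and in the final passage back, that assertion (3) feeds into the argument.

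For the inductive step towards (1), let $q=q_m\colon D\to D'=G_1\times\dots\times G_{m-1}$ be the projection forgetting the last coordinate and put $S'=q(S)$; since $p_{ij}(S')=p_{ij}(S)$ has finite index for all $i<j\le m-1$, the inductive hypothesis makes $S'$ finitely presented. Via $(q,p_m)$ the group $S$ sits in $S'\times G_m$ as a subdirect product of the two finitely presented groups $S'$ and $G_m$, with vertical subgroups $L:=S\cap D'$ and $K:=S\cap G_m$ and common quotient $Q:=S'/L\cong G_m/K$. The engine for concluding that such an $S$ is finitely presented is the $1$--$2$--$3$ Theorem, in the effective and asymmetric form of \cite{BHMS1}: given that $S'$ and $G_m$ are finitely presented, it suffices to know in addition that one of $L,K$ is finitely generated and that $Q$ is of type $F_3$.

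\textbf{The main obstacle} is exactly these two extra inputs: finite generation of a vertical subgroup, and type $F_3$ for the quotient $Q$. Neither is available off the shelf -- in general $K=S\cap G_m$ is infinitely generated and $Q$ is infinite -- and supplying them is the technical heart of the theorem. This is where one uses that $S$ virtually surjects to \emph{pairs} rather than merely to each factor: the hypotheses that $p_{im}(S)<G_i\times G_m$ have finite index force $K$ to be ``large'' in a controlled way, and this is precisely what makes $Q$ finite enough. To exploit it one strengthens the statement carried by the induction so that it also records the finiteness properties of the iterated fibres $L=S\cap D'$ and of the quotients $Q=G_m/(S\cap G_m)$, propagating finite generation and types $F_2$ and $F_3$ up the induction in tandem, in the spirit of the higher-dimensional generalisations of the $1$--$2$--$3$ Theorem. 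Getting this simultaneous induction to close -- and keeping every step effective -- is where essentially all the work lies.

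Assertion (2) is proved by a parallel induction through the same projection $q_m$. Any finite-index subgroup of $D$ contains one of the form $D_0=H_1\times\dots\times H_m$ with each $H_i<G_i$ of finite index, and $\gamma_c(D_0)=\gamma_c(H_1)\times\dots\times\gamma_c(H_m)$, so it is enough to place each pure factor $\gamma_{m-1}(H_i)$ inside $S$. One gets these from the inductive fact that $\gamma_{m-2}(D_0')<S'$ together with surjectivity of $S\to G_i$, forming suitable $(m-1)$-fold commutators of lifts and using $[G_i,G_j]=1$ to clear the unwanted coordinates; some care is needed with the bookkeeping because $S$ is a proper subgroup of $D$ and so commutators cannot be lifted across $q_m$ naively. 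Finally (3) follows from (2): $S$ then contains $\gamma_{m-1}(D_1)$ for a finite-index \emph{normal} $D_1\triangleleft D$, so $S/\gamma_{m-1}(D_1)$ embeds in the virtually nilpotent, hence subgroup-separable, group $D/\gamma_{m-1}(D_1)$; thus $S\cap D_1$ is closed in the profinite topology of $D_1$, and hence of $D$, and $S$ -- a finite union of cosets of $S\cap D_1$ -- is closed. The algorithmic clause is obtained by making each step effective: the effective $1$--$2$--$3$ Theorem of \cite{BHMS1} supplies the relations, the finite-index reductions are carried out using the given generating set $\Theta$, and Reidemeister--Schreier handles the base case and the passages to subgroups of finite index.
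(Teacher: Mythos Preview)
The paper does not prove this theorem: it is quoted from \cite{BHMS2}, and the only part the present paper addresses directly is item~(2), which it derives as a special case of the more general (and non-inductive) Lemma~\ref{l:easy-proj}. So there is essentially no ``paper's own proof'' of items (1), (3), or the algorithmic clause to compare against.

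That said, let me comment on your proposal on its merits. Your treatment of (2) is correct but more laboured than necessary: the direct commutator trick of Lemma~\ref{l:easy-proj} avoids induction entirely. Your argument for (3) from (2) is fine: finitely generated virtually nilpotent groups are polycyclic-by-finite, hence subgroup separable, and the passage back up to $D$ is as you describe.

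For item (1), however, what you have written is an outline, not a proof, and you say as much yourself (``Getting this simultaneous induction to close \dots\ is where essentially all the work lies''). The specific gap is that you assert the pairwise hypothesis ``makes $Q$ finite enough'' for the $1$--$2$--$3$ Theorem to apply, but you give no mechanism for this, and in fact $Q=G_m/(S\cap G_m)$ need not be of type $F_3$ in any obvious way from the hypotheses on $S$ alone. The actual argument in \cite{BHMS2} does not run a naive induction through $q_m$ and the $1$--$2$--$3$ Theorem in this form; rather, one first establishes (2), so that $S$ is trapped between $\gamma_{m-1}(D_0)$ and $D_0$, and then proves finite presentability for subgroups of this ``virtually nilpotent quotient'' type, which requires a strengthened inductive hypothesis carrying higher finiteness data (in the spirit of the $n$--$(n{+}1)$--$(n{+}2)$ theorem). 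Your sketch gestures at this (``propagating finite generation and types $F_2$ and $F_3$ up the induction in tandem''), but the content of the theorem is precisely in making that work, and your proposal does not do so.
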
 

Item (1) of this theorem is considerably deeper than item (2), the latter being an instance of  the following general
observation \cite{BM}.
 
\begin{lemma}\label{l:easy-proj} Let $k\ge 2$ and let $G_1,\dots,G_m$ be groups.
If $S<D=G_1\times\dots\times G_m$ projects onto each $k$-tuple of factors,
then $\gamma_c(D)<S$ where $c=\lceil (m-1)/(k-1)\rceil$.
\end{lemma}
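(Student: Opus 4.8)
The plan is to induct on $m$, the number of factors, using the key algebraic fact that the lower central series of a direct product decomposes coordinatewise: $\gamma_c(G_1\times\dots\times G_m) = \gamma_c(G_1)\times\dots\times\gamma_c(G_m)$. The base case $m\le k$ is immediate, since then $S$ projects onto the whole product, so $S = D$ and $\gamma_1(D)\le S$ with $c=1 = \lceil (m-1)/(k-1)\rceil$. For the inductive step with $m>k$, write $D = D'\times G_m$ where $D' = G_1\times\dots\times G_{m-1}$. Let $S'$ be the projection of $S$ to $D'$; it projects onto each $k$-tuple of the first $m-1$ factors, so by induction $\gamma_{c'}(D')\le S'$ with $c' = \lceil (m-2)/(k-1)\rceil$. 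Dually, let $K = S\cap D'$ be the kernel of the projection $S\to G_m$. The goal is to show $\gamma_c(D)\le S$ with $c = \lceil(m-1)/(k-1)\rceil$.

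The heart of the argument is a commutator trick: given elements $d_1,\dots,d_c\in D$, I want to rewrite the iterated commutator $[d_1,\dots,d_c]$ as an element of $S$ by repeatedly replacing coordinates with elements of $S$. The first step is to absorb the $G_m$-coordinate. Since $S$ surjects onto $G_m$ (it even surjects onto any $k$-tuple containing the $m$-th factor), for each $d_i$ there is $s_i\in S$ agreeing with $d_i$ in the $G_m$-coordinate; write $d_i = s_i c_i$ with $c_i\in D'$ (identifying $D'$ with its image under the obvious section). Expanding $[d_1,\dots,d_c] = [s_1c_1,\dots,s_cc_c]$ via commutator identities and collecting terms, one checks that this lies in the subgroup generated by $S$ together with $\gamma_c(D')$ — in fact, modulo $S$, the commutator reduces to an element of $\gamma_{c}(D')$, hence it suffices to show $\gamma_c(D')\le S$... but that is too strong. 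The correct bookkeeping is finer: one shows that $\gamma_c(D)\le \langle S, \gamma_c(D')\rangle$ and then, since $c\ge c'+1$ when $k\ge 2$ (because $\lceil(m-1)/(k-1)\rceil \ge \lceil(m-2)/(k-1)\rceil$, with the gap enough to conclude), one has $\gamma_c(D') \le \gamma_{c'+1}(D')\cdot(\text{something})$...

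Let me instead organise it through the two subgroups $K = S\cap D'$ and the projection $S\to G_m$ more symmetrically. Because $S$ projects onto every $k$-tuple of factors among $G_1,\dots,G_{m-1}$ \emph{and} onto every $k$-tuple involving $G_m$, one deduces: (a) $S$ surjects onto $G_m$; and (b) the projection of $K=S\cap D'$ to any $(k-1)$-tuple of the factors $G_1,\dots,G_{m-1}$ is onto (fill out such a $(k-1)$-tuple with $G_m$, use surjectivity of $S$ onto that $k$-tuple, and intersect with $D'$). By the inductive hypothesis applied to $K<D'$ projecting onto $(k-1)$-tuples, $\gamma_{c''}(D')\le K\le S$ where $c'' = \lceil (m-1)/(k-2)\rceil$ — wait, the induction is on $m$ with $k$ fixed, so I must be careful to phrase the statement uniformly in $k$. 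Cleaner: prove the statement by induction on $m$ for all $k\ge 2$ simultaneously. Then (b) says $K$ projects onto $(k-1)$-tuples of the $m-1$ factors, and a separate, already-available instance of the lemma (with parameter $k-1$ in place of $k$, which is legitimate provided $k-1\ge 2$, i.e. $k\ge 3$) gives $\gamma_{c^*}(D')\le K$ with $c^* = \lceil (m-1)/(k-2)\rceil \le c$. Hence $\gamma_c(D') = \gamma_c(G_1)\times\dots\times\gamma_c(G_{m-1}) \le \gamma_{c^*}(D') \le S$. Combined with surjectivity of $S\to G_m$ and the commutator expansion replacing the $G_m$-coordinate of any $c$-fold commutator by an element of $S$ at the cost of an error in $\gamma_c(D')$, this yields $\gamma_c(D)\le S$. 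The remaining edge case $k=2$ must be handled directly: there $S$ projects onto each single factor, which forces $S=D$ only when... no — for $k=2$, $S$ surjecting onto each pair is exactly the hypothesis of the VSP-type situation, and $c = m-1$; this case is the classical one and can be proved by the same coordinate-elimination induction without recursing in $k$, eliminating one factor at a time and noting each elimination costs one in the nilpotency class.

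The main obstacle I anticipate is the precise commutator bookkeeping in the coordinate-elimination step: verifying that expanding $[s_1c_1,\dots,s_cc_c]$ with the $s_i\in S$ and $c_i\in D'$ genuinely produces an element of $S$ once one knows $\gamma_c(D')\le S$, and that no "cross terms" of weight $<c$ survive outside $S$. This is a standard but fiddly manipulation with the identities $[xy,z]=[x,z]^y[y,z]$ and $[x,yz]=[x,z][x,y]^z$, iterated; the key point making it work is that every term in the full expansion which is not already an honest conjugate of some $s_i^{\pm1}$-commutator lands in $\gamma_c(D')$ because it involves a commutator of weight $\ge c$ all of whose entries have trivial $G_m$-coordinate. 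I would present this as a short computation or cite the analogous argument in \cite{BM}, to which the excerpt already defers for this "general observation."
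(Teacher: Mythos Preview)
Your inductive scheme breaks at the key inequality $c^*\le c$. Your observation (b) is correct: $K=S\cap D'$ surjects $(k-1)$-tuples of $D'=G_1\times\dots\times G_{m-1}$. But applying the lemma inductively to $K<D'$ with parameter $k-1$ yields $\gamma_{c^*}(D')\le K$ with $c^*=\lceil (m-2)/(k-2)\rceil$ (not $\lceil(m-1)/(k-2)\rceil$: $D'$ has $m-1$ factors). For $m>k$ one has $(m-2)/(k-2)>(m-1)/(k-1)$, so $c^*\ge c$ and typically $c^*>c$; e.g.\ $m=5$, $k=3$ gives $c=2$ but $c^*=3$. Thus the recursion in $k$ produces a \emph{weaker} bound, and the induction does not close. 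Your earlier commutator-expansion attempt also stalls: $S$ is not normal in $D$, so ``modulo $S$'' is not meaningful, and the cross-terms in the expansion of $[s_1c_1,\dots,s_cc_c]$ are conjugates of commutators rather than elements of $S$.

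The paper's argument is direct, not inductive. Since $S$ surjects each factor, $S\cap G_i$ is normal in $G_i$, so it suffices to exhibit each left-normed commutator $[x_1,\dots,x_c]$ with $x_j\in G_i$ as an element of $S$. Cover $\{1,\dots,m\}\smallsetminus\{i\}$ by sets $C_1,\dots,C_c$ each of size at most $k-1$ (possible exactly because $c(k-1)\ge m-1$). For each $j$, surjectivity of $S$ onto the $k$-tuple indexed by $\{i\}\cup C_j$ gives $s_j\in S$ whose $i$-th coordinate is $x_j$ and whose $b$-th coordinate is $1$ for all $b\in C_j$. Then for every $b\neq i$ some $s_j$ is trivial at $b$, so $[s_1,\dots,s_c]$ vanishes in every coordinate except the $i$-th, where it equals $[x_1,\dots,x_c]$. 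This is what the paper's example $([x,y],1,1)=[(x,\ast,1),(y,1,\ast)]$ is illustrating.
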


To prove this, one first notes that since $S$ projects onto each factor, $S\cap G_i$ is normal in $G_i$, so it is enough to show that
$S\cap G_i$ contains all iterated commutators $[x_1,\dots,x_c]$. And such commutators can be obtained explicitly from the hypothesis;
for example, when  $m=3$ and $k=2$,  one has
$
([x,y], 1, 1) = [\, (x,\ast,1),\ (y,1,\ast)\, ].
$ 

\begin{corollary}\label{c:H_1enough} Let $k\ge 2$.
If $G_1,\dots,G_m$ are finitely generated and
$S<D=G_1\times\dots\times G_m$ projects onto each pair of factors,
 then the projection of $S$ to a $k$-tuple of factors $T=G_{i_1}\times\dots\times G_{i_k}$
will be surjective if and only if the composition $S\to T\to T^{\rm{ab}}=H_1(T,\Z)$ is surjective. 
\end{corollary}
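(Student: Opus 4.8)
The plan is to reduce the claim to a fact about finitely generated nilpotent groups, where Lemma~\ref{l:ab-enough} does the work. One implication is trivial: if $S\to T$ is onto then so is the composition $S\to T\to T^{\rm{ab}}$. So suppose $S\to T^{\rm{ab}}$ is onto; I must show $S\to T$ is onto.

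First I would dispose of the degenerate case $k=2$: then by hypothesis $S$ already projects onto the pair of factors $T=G_{i_1}\times G_{i_2}$, so there is nothing to prove. Assume therefore $k\ge 3$ (and $k\le m$, else there are no $k$-tuples of factors). Let $\pi\colon D\to T$ be the coordinate projection and put $L=\pi(S)$. Since $S$ projects onto every pair of factors of $D$, $L$ projects onto every pair of factors of $T$, and so Lemma~\ref{l:easy-proj}, applied inside $T$ with $2$ in place of $k$, gives $\gamma_{k-1}(T)\le L$.

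Next I would pass to the quotient $Q:=T/\gamma_{k-1}(T)$, which is a finitely generated nilpotent group (finitely generated because each $G_i$ is). Since $k\ge 3$ we have $\gamma_{k-1}(T)\le[T,T]$, so $Q^{\rm{ab}}$ is canonically identified with $T^{\rm{ab}}=H_1(T,\Z)$, and under this identification the image of $L$ in $Q^{\rm{ab}}$ coincides with the image of $S$ in $T^{\rm{ab}}$, hence is all of $T^{\rm{ab}}$ by assumption. By Lemma~\ref{l:ab-enough}, the image of $L$ in $Q$ is then all of $Q$, i.e. $L\,\gamma_{k-1}(T)=T$; combined with $\gamma_{k-1}(T)\le L$ this forces $L=T$, which is exactly the surjectivity of $S\to T$.

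The argument carries no serious obstacle; it is bookkeeping built on Lemmas~\ref{l:easy-proj} and~\ref{l:ab-enough}. The two points that require attention are: the identification $Q^{\rm{ab}}\cong H_1(T,\Z)$ needs $k-1\ge 2$, which is precisely why the case $k=2$ has to be separated off; and one must observe that the hypothesis on $S$ — projecting onto each pair of factors — is inherited by $\pi(S)$ inside $T$, so that Lemma~\ref{l:easy-proj} may legitimately be applied to $T$ rather than to $D$.
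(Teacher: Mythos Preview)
Your proof is correct and follows essentially the same approach as the paper's: show that the image of $S$ in $T$ contains a term of the lower central series of $T$, reduce to the resulting nilpotent quotient, and invoke Lemma~\ref{l:ab-enough}. The only cosmetic difference is that the paper applies Lemma~\ref{l:easy-proj} in $D$ to get $\gamma_{m-1}(D)\le S$ (hence $\gamma_{m-1}(T)\le\pi(S)$), while you apply it inside $T$ to get the sharper $\gamma_{k-1}(T)\le\pi(S)$ at the cost of separating off $k=2$.
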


\begin{proof}
$S$ contains $\gamma_{m-1}(D)$ and hence the image of $S$ in $T$ contains $\gamma_{m-1}(T)$, so
$S\to T$ is surjective if and only if $S\to T/\gamma_{m-1}(T)$ is surjective. The result now follows from Lemma \ref{l:ab-enough}.
\end{proof}

\begin{corollary}\label{c:useless} Let  $G_1,\dots,G_m$ be finitely generated groups, each of which is generated by elements of
finite order.  If $S<D=G_1\times\dots\times G_m$ projects onto each pair of factors,
then $S$ has finite index in $D$.
\end{corollary}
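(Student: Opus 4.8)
The plan is to derive this from Corollary \ref{c:H_1enough} together with Lemma \ref{l:tor-finite}. The hypothesis that $S$ projects onto each pair of factors means that, in particular, $S$ projects onto each single factor, so $S$ is a subdirect product and Lemma \ref{l:easy-proj} applies with $k=2$: $S$ contains $\gamma_{m-1}(D)$. Since $D/\gamma_{m-1}(D)$ is a nilpotent group, to show $S$ has finite index in $D$ it suffices to show the image $\overline S$ of $S$ in the nilpotent quotient $N := D/\gamma_{m-1}(D)$ has finite index there.

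First I would pass to the abelianization. By Lemma \ref{l:ab-enough}, a subgroup of a finitely generated nilpotent group has finite index if and only if its image in the abelianization has finite index, so it is enough to show that the image of $S$ in $D^{\rm{ab}} = H_1(D,\Z) = \bigoplus_{i=1}^m H_1(G_i,\Z)$ has finite index. Now each $G_i$ is generated by elements of finite order, so $H_1(G_i,\Z)$ is a finitely generated abelian group generated by torsion elements, hence each $H_1(G_i,\Z)$ is finite; therefore $D^{\rm{ab}}$ is itself a finite group. Consequently any subgroup of $D^{\rm{ab}}$ has finite index, and in particular the image of $S$ does.

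Equivalently, and perhaps more cleanly, one can apply Corollary \ref{c:H_1enough} directly with $k=m$ (or with $T=D$ itself): $S\to D$ is surjective if and only if $S\to D^{\rm{ab}}$ is surjective. Here we do not even need surjectivity onto $D^{\rm{ab}}$ — finiteness of $D^{\rm{ab}}$ already forces $[D:\overline{\overline S}]<\infty$ where $\overline{\overline S}$ is the image in $D^{\rm{ab}}$, and pulling back through the intermediate nilpotent quotients (each extension by a finitely generated abelian group, with the relevant piece again a quotient of a finite group since $\gamma_c(D)/\gamma_{c+1}(D)$ is a quotient of $(\gamma_c(F_k)/\gamma_{c+1}(F_k))^{\,?}$-type data — in fact a finitely generated torsion group, hence finite) shows $[D:S]<\infty$ layer by layer.

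There is essentially no obstacle here: the only point requiring a moment's care is the reduction to the nilpotent quotient, i.e. recording that $\gamma_{m-1}(D)\le S$ so that it suffices to work modulo $\gamma_{m-1}(D)$, and then observing that a finitely generated nilpotent group whose abelianization is finite is itself finite — this is exactly Lemma \ref{l:tor-finite} applied to $N = D/\gamma_{m-1}(D)$ once one knows $N$ is generated by elements of finite order, which it is, being a quotient of $D$ and $D$ being generated by the (finite-order) generators of the $G_i$. So in fact the slickest route is: $N := D/\gamma_{m-1}(D)$ is a finitely generated nilpotent group generated by elements of finite order, hence finite by Lemma \ref{l:tor-finite}; therefore $[D:S]\le [D:\gamma_{m-1}(D)] = |N| < \infty$.
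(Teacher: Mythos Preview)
Your final paragraph is exactly the paper's proof: $S\supseteq\gamma_{m-1}(D)$ by Lemma~\ref{l:easy-proj}, and $D/\gamma_{m-1}(D)$ is a finitely generated nilpotent group generated by elements of finite order, hence finite by Lemma~\ref{l:tor-finite}, so $[D:S]<\infty$. That is correct and complete.

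The earlier exploratory paragraphs should be cut. In particular, your invocation of Lemma~\ref{l:ab-enough} (``a subgroup of a finitely generated nilpotent group has finite index if and only if its image in the abelianization has finite index'') is not what that lemma says; the lemma is about generation, not finite index of subgroups, and while the finite-index version is true it requires a separate argument. The ``layer by layer'' sketch via $\gamma_c(D)/\gamma_{c+1}(D)$ is likewise more involved than necessary and not fully justified as written. Since you yourself identify the clean route at the end, simply present that and discard the rest.
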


\begin{proof}
$S$ contains $\gamma_{m-1}(D)$ and Lemma \ref{l:tor-finite} tells us that  $D/\gamma_{m-1}(D)$ is finite.
\end{proof}

\begin{remark}[{\tt{Algorithms}}]\label{r:algo}
Corollary \ref{c:H_1enough} makes the process of determining whether a subgroup $S$ of a direct product of finitely presented groups maps onto
all $k$-tuples of factors algorithmically decidable, provided one has the {\em a priori} knowledge that $S$ projects to each pair of factors.
Similarly, the problem of deciding if $S$ projects to a subgroup of finite index in each $k$-tuple of factors is decidable  given this
knowledge.
\end{remark}

\subsection{Pushing forward and pulling back}\label{s:push-and-pull}
Given an $m$-tuple of group epimorphisms  $\mu_i :H_i\onto G_i$, the {\em push-forward} of a subgroup
$S<H_1\times\dots\times H_m$ is its image $\mu(S)<G_1\times\dots\times G_m$
under $\mu = (\mu_1,\dots,\mu_m)$.
The {\em pull-back} of a subgroup $T<G_1\times\dots\times G_m$ is $\mu^{-1}(T)$.  
 
The following lemma is immediate from the definitions.

\begin{lemma} \label{l:push-and-pull} If $S<G_1\times\dots\times G_m$ projects to $k$-tuples of factors, then
so does any pull-back or push-forward of $S$. And if $S$ contains the $c$-th term of the lower central
series of the product, then so does any pull-back or push-forward of $S$.
\end{lemma}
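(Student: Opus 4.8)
The plan is to derive all four assertions (for push-forwards and for pull-backs, and in each case both for the property of projecting onto $k$-tuples of factors and for the property of containing $\gamma_c$ of the product) from three elementary facts about the diagonal map $\mu=(\mu_1,\dots,\mu_m)\colon H:=H_1\times\dots\times H_m\onto G:=G_1\times\dots\times G_m$: that $\mu$ is surjective, that $\ker\mu=\prod_{i=1}^m\ker\mu_i$, and that for every $J\subseteq\{1,\dots,m\}$ the coordinate projections satisfy $p_J^G\circ\mu=\mu_J\circ p_J^H$, where $p_J$ denotes projection onto the factors indexed by $J$ and $\mu_J=\prod_{i\in J}\mu_i$.

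For the push-forward statements I would argue directly. If $p_J^H(S)=\prod_{i\in J}H_i$ for every $k$-element set $J$, then $p_J^G(\mu(S))=\mu_J\big(p_J^H(S)\big)=\mu_J\big(\prod_{i\in J}H_i\big)=\prod_{i\in J}G_i$, since each $\mu_i$ is onto; thus $\mu(S)$ projects onto all $k$-tuples of factors. Similarly, since an epimorphism carries the lower central series onto the lower central series, $\mu(\gamma_c(H))=\gamma_c(G)$, so $\gamma_c(H)\le S$ forces $\gamma_c(G)\le\mu(S)$.

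For the pull-back statements the key remark is that $\mu^{-1}(T)$ always contains $\ker\mu=\prod_i\ker\mu_i$. For the lower central series this finishes things at once: $\mu(\gamma_c(H))=\gamma_c(G)\le T$ gives $\gamma_c(H)\le\mu^{-1}(T)$. For the projection property, fix a $k$-element $J$ with $p_J^G(T)=\prod_{i\in J}G_i$. Then $p_J^H(\mu^{-1}(T))$ is a subgroup of $\prod_{i\in J}H_i$ that contains $p_J^H(\ker\mu)=\ker\mu_J$ and whose image under $\mu_J$ is $p_J^G\big(\mu(\mu^{-1}(T))\big)=p_J^G(T)=\prod_{i\in J}G_i$ (using surjectivity of $\mu$ to get $\mu(\mu^{-1}(T))=T$); by the correspondence theorem, a subgroup of $\prod_{i\in J}H_i$ that contains $\ker\mu_J$ and surjects modulo $\ker\mu_J$ must be the whole group, so $\mu^{-1}(T)$ projects onto the $J$-factors.

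None of this presents a real obstacle; the single point worth stating carefully is the pull-back/projection case, where one should resist simply lifting a point of $T$ (its lift need not have the prescribed $J$-coordinates) and instead use either the correspondence-theorem argument above or, equivalently, correct a lift by an element of $\ker\mu\le\mu^{-1}(T)$ supported on $J$. The remaining assertions are, as the statement says, immediate from the definitions.
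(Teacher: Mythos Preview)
Your proof is correct in every detail. The paper, however, gives no proof at all: it simply asserts that the lemma ``is immediate from the definitions'' and moves on. What you have done is write out carefully the verification that the author leaves to the reader, including the one point that genuinely deserves a sentence of care --- the pull-back/projection case, where you rightly invoke the correspondence theorem (equivalently, correct an arbitrary lift by an element of $\ker\mu$ supported on $J$) rather than naively lifting elements. So your approach is not different from the paper's; it is the paper's implicit argument made explicit.
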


\section{Residually free groups, limits groups and coordinate projections} \label{s:limit}
A group $G$ is {\em residually free} if
for every $g\in G\smallsetminus\{1\}$ there exists a homomorphism $\phi:G\to F$, with $F$ free,
such that $\phi(g)\neq 1$; and $G$ is {\em{fully residually free}} if for every finite set $S\subset G$
there is a homomorphism $G\to F$ that is injective on $S$.  
Finitely generated, fully residually free groups are more commonly called {\em{limit groups}}, reflecting the powerful geometric
approach to their study initiated by Sela \cite{sela} {\em{et seq.}}
This important and
much-studied class of groups contains free groups (obviously), free abelian groups, and the fundamental groups of all
closed surfaces of euler characteristic at most $-2$.

The study of finitely presented, residually free groups $\G$ reduces to the study of subdirect products 
of limit groups because there is a canonical (and algorithmic) embedding 
 $\G\hookrightarrow \eenv(\G)$ where $\eenv(\G)$, the {\em existential envelope} of $\G$, is a direct product of limit groups 
 \cite{BMR}, \cite{BHMS2}.
(At most one of the direct factors is abelian, and the intersection of $\G$ with this is the centre of $\G$.)
The following theorem is therefore an important tool in classifying which residually free groups are finitely presented.
This is the main result of \cite{BHMS1}. The first item
provides a converse to the VSP Theorem in the setting of limit groups, while the second item
points to the key role that finiteness properties play in understanding residually free groups.

\begin{theorem}[\cite{BHMS2}]
Let $S<D=\Lambda_1\times\dots\times\Lambda_m$ be a full subdirect product, where the $\Lambda_i$
are non-abelian limit groups. 
\begin{enumerate}
\item  If $S$ is finitely presented, then the projection $p_{ij}(S)<\Lambda_i\times\Lambda_j$ has finite index for $1\le i < j\le m$.
\item If  $S$ is not of finite index in $D$, then $S$  is not of type $\wFP_m$.
\end{enumerate}
\end{theorem}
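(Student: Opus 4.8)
My plan is to treat the two items on a common footing, since both rest on a single structural input about limit groups and a single homological mechanism. The \emph{structural input} is that a non-abelian limit group $\Lam$ has no nontrivial finitely generated normal subgroup of infinite index — for a free group such a subgroup would be free of infinite rank, being $\pi_1$ of an infinite graph, and the general case follows by descending the limit-group hierarchy to the free and free-abelian cases, or directly from the toral relative hyperbolicity of $\Lam$ together with its abelian peripheral structure. The \emph{homological mechanism} is a Bieri--Stallings-type computation: in each of the relevant group extensions an infinite-rank normal subgroup forces some homology group to be infinite-dimensional. I will use throughout that full subdirect products remain such under coordinate projections, that limit groups and their finitely generated subgroups are of type $\FP_\infty$ with finite classifying spaces, and that $\wFP_m$ is detected by rational homology and (by transfer) is unchanged under passage to finite-index subgroups, so it is harmless to argue with $\Q$-coefficients and with infinite-dimensionality in place of non-finite-generation.

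For item (1) I would prove the contrapositive: if some $p_{ij}(S)$ has infinite index then $S$ is not finitely presented; after relabelling, take $\{i,j\}=\{1,2\}$. Put $\bar S=p_{12}(S)<\Lam_1\times\Lam_2$, a finitely generated full subdirect product, and $N_2=\bar S\cap\Lam_2\triangleleft\Lam_2$; since $\bar S$ projects onto $\Lam_1$ we have $[\Lam_1\times\Lam_2:\bar S]=[\Lam_2:N_2]=\infty$, so $N_2$ is infinitely generated by the structural input. Finite presentability of $S$ alone is not yet enough — via the extension $1\to M\to S\to\Lam_1\to1$ with $M=\ker(p_1|_S)$ (which projects onto $N_2$ under $p_2$) and the finite presentability of $\Lam_1$, one only learns that $N_2$ is finitely generated \emph{as a normal subgroup} of $\Lam_2$, the paradigm being $[\Lam,\Lam]\triangleleft\Lam$. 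The decisive step is to upgrade ``$S$ is of type $\FP_2$'' to ``$H_2(S;\Q)$ is infinite-dimensional'', which is the contradiction; running the Lyndon--Hochschild--Serre spectral sequence of $1\to M\to S\to\Lam_1\to1$ reduces this to a finiteness question for $H_1(M;\Q)$ as a $\Q\Lam_1$-module. \emph{This is the main obstacle}: it does not follow formally from $M$ being infinitely generated — already for $\Lam_1=\Lam_2=F_2$ and $S=\ker(F_2\times F_2\to\Z^2)$ the module is free of rank one over $\Q[\Z^2]$, yet $H_1(F_2;\Q[\Z^2])\cong\Q[\Z^2]$ is infinite-dimensional — and one resolves it by descending the hierarchies of $\Lam_1$ and $\Lam_2$ to reduce to products of free groups and then making the Bieri--Stallings computation; this is the technical core of \cite{BHMS1,BHMS2}. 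For $m>2$ the same spectral-sequence analysis is carried out relative to $S\to\Lam_1$, inducting as in those papers.

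For item (2) I would induct on $m$, the case $m=1$ being vacuous. Project away the last factor: let $\rho\colon S\to\Lam_1\times\dots\times\Lam_{m-1}$, put $S'=\rho(S)$ — a finitely generated full subdirect product of non-abelian limit groups — and $K=\ker\rho=S\cap\Lam_m\triangleleft\Lam_m$, which is nontrivial. If $[\Lam_m:K]<\infty$, then $S$ is a finite-index subgroup of $S'\times\Lam_m$, whence $S'$ has infinite index in $\Lam_1\times\dots\times\Lam_{m-1}$; by induction some finite-index subgroup of $S'$ has infinite-dimensional $H_j(\,\cdot\,;\Q)$ for some $j\le m-1$, and since $H_1(\Lam_m;\Q)\ne0$ the Künneth formula and transfer yield a finite-index subgroup of $S$ with infinite-dimensional $H_{j+1}(\,\cdot\,;\Q)$, $j+1\le m$, so $S$ is not $\wFP_m$. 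If instead $[\Lam_m:K]=\infty$, then $K$ is infinitely generated by the structural input, and one shows that the extension $1\to K\to S\to S'\to1$ already prevents $S$ from being $\FP_2$, hence $\wFP_m$; this is where the Bieri--Stallings mechanism re-enters — equivalently, one exhibits each $\Lam_i$ acting properly cocompactly on a contractible complex carrying a Bestvina--Brady Morse function and applies Brown's criterion to $\prod_i\Lam_i$ — and it is the main obstacle of item (2). The remaining work is bookkeeping: arranging the case split compatibly after replacing the $\Lam_i$ by finite-index subgroups (and $S$ by its intersection with the new product), and tracking that ``infinite index'' is the hypothesis that propagates through the induction. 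The appearance of the weak property $\wFP_m$ rather than $\FP_m$ is exactly because the failure of finiteness may only become visible on a proper finite-index subgroup.

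In short, both items reduce, by otherwise soft homological algebra, to the single hard fact that an infinite-rank normal subgroup of a non-abelian limit group sitting in a subdirect product forces that subdirect product to be homologically deficient at the expected level — failing $\FP_2$ in the two-factor setting of (1), failing $\wFP_m$ in the $m$-factor setting of (2). That fact is established by reducing along the limit-group hierarchy to direct products of free and free-abelian groups, where it is the classical Bieri--Stallings phenomenon: the reduced $(m-1)$st homology of the $m$-fold join of the (infinite, totally disconnected) end-spaces of the factors is infinite-dimensional.
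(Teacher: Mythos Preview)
The paper does not prove this theorem: it is quoted as background from \cite{BHMS1, BHMS2} (the sentence introducing it reads ``This is the main result of \cite{BHMS1}''), and no argument for it appears anywhere in the present article. There is therefore no proof here against which to compare your proposal.

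That said, your outline is broadly faithful to the strategy of \cite{BHMS1, BHMS2}: the structural fact about normal subgroups of non-abelian limit groups, the Lyndon--Hochschild--Serre spectral sequence for the projection to a single factor, the induction on $m$ for item~(2), and the reduction down the limit-group hierarchy are all genuine ingredients of those papers, and you are right to identify the passage from ``$K$ infinitely generated'' to ``homology infinite-dimensional at the relevant degree'' as the hard step and to defer it.

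There is, however, a concrete error in Case~2 of your induction for item~(2). You assert that if $K=S\cap\Lambda_m$ has infinite index in $\Lambda_m$ then the extension $1\to K\to S\to S'\to 1$ already prevents $S$ from being of type $\FP_2$. This is false. Take $m=3$, $\Lambda_1=\Lambda_2=\Lambda_3=F_2$, and $S=\ker(F_2^3\to\Z)$ for any surjection that is nontrivial on each factor. Then $S$ is a full subdirect product, each pairwise projection is onto (hence $S$ is finitely presented by the VSP Theorem, in particular $\FP_2$), yet $K=S\cap\Lambda_3=\ker(F_2\to\Z)$ has infinite index. What actually fails in this example is $\wFP_3$, not $\FP_2$, and the genuine argument in \cite{BHMS1} (and in Kochloukova's refinement) must detect failure at degree $m$, not degree $2$; it does so via a more delicate analysis than a single short exact sequence with kernel $K$. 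Your closing summary gets the degree right (``failing $\wFP_m$ in the $m$-factor setting''), but the body of the argument does not, and the case split as written would not go through.
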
 

Point (2) was improved upon by Kochloukova \cite{Koch} and Kuckuck \cite{Kuckuck}, who tied the finiteness properties of $S$
more closely to projections to $k$-tuples. The following statement combines their results.

\begin{theorem}[\cite{Koch},  \cite{Kuckuck}] \label{t:KK}
For $2\le k\le m$, a full subdirect product of non-abelian limit groups
$S< \Lambda_1\times\dots\times \Lambda_m$ projects to a subgroup of finite index in each $k$-tuple of factors
$\Lambda_{i(1)}\times\dots\times \Lambda_{i(k)}$  if
and only if $S$ is of type ${\rm{wFP}}_k$.  
\end{theorem}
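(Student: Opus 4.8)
I would prove the two implications separately; both go by peeling off one direct factor at a time and inducting on the number of factors $m$, and each reduces to a tool already available above --- on one side the asymmetric $1$--$2$--$3$ Theorem (in its homological form, and in a version allowing several factors), on the other the non-finiteness statement recalled just above, applied with the number of factors equal to $k$.

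\textbf{From virtual surjection to $k$-tuples to type $\wFP_k$.} I would in fact prove the stronger conclusion that $S$ is of type $\FP_k$. The case $k=2$ is immediate: virtual surjection to pairs is exactly the hypothesis of Theorem~\ref{t:VSP}, which gives that $S$ is finitely presented. For $k\ge 3$, induct on $m$. If $m=k$ the hypothesis forces $S$ to have finite index in $D=\Lambda_1\times\dots\times\Lambda_m$, which has a finite classifying space, so $S$ is of type $\FP_\infty$. For the inductive step, so $k<m$, write $D=D'\times\Lambda_m$ with $D'=\Lambda_1\times\dots\times\Lambda_{m-1}$, let $A=p_{D'}(S)$ be the image of $S$ under the projection $D\to D'$, and observe that $S$ is a full subdirect product of $A\times\Lambda_m$, so by Goursat's Lemma it is the fibre product $A\times_Q\Lambda_m$ over the common quotient $Q=\Lambda_m/(S\cap\Lambda_m)$. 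Now assemble three facts. First, $A$ virtually surjects onto every $k$-tuple among $\Lambda_1,\dots,\Lambda_{m-1}$, hence is of type $\FP_k$ by induction, while $\Lambda_m$ has a finite classifying space. Second, the fibre $S\cap D'$ virtually surjects onto every $(k-1)$-tuple among $\Lambda_1,\dots,\Lambda_{m-1}$ (by the same projection argument, one degree lower), hence is of type $\FP_{k-1}$ by induction --- here $k-1\ge 2$, so the bottom of the induction is the case already settled. Third, $Q$ is of type $\FP_\infty$: virtual surjection to pairs gives $\gamma_{m-1}(D_0)<S$ for some finite-index $D_0<D$ by Theorem~\ref{t:VSP}, so $D/S$ is virtually nilpotent and its subgroup $Q$ is a finitely generated virtually nilpotent group. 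These are precisely the hypotheses of the homological asymmetric $1$--$2$--$3$ Theorem --- in the form in which the amalgamating quotient need only be of type $\FP_\infty$ and only one of the two fibres need be controlled, and in the several-factor version due to Kuckuck \cite{Kuckuck} --- and it yields that $S=A\times_Q\Lambda_m$ is of type $\FP_k$. The substantive input is this extended $1$--$2$--$3$ Theorem; the rest is bookkeeping about which projections survive to $A$ and to $S\cap D'$, together with the virtual nilpotency of $D/S$.

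\textbf{From type $\wFP_k$ to virtual surjection to $k$-tuples.} Here I argue the contrapositive: supposing that $p_T(S)$ has infinite index in $\prod_{i\in T}\Lambda_i$ for some $k$-tuple $T$, I must show $S$ is not of type $\wFP_k$. First reduce to $k$ factors: since $S$ is a full subdirect product, so is $\bar S:=p_T(S)$ inside $\prod_{i\in T}\Lambda_i$ --- a full subdirect product of the $k$ non-abelian limit groups $\{\Lambda_i : i\in T\}$ --- and it is not of finite index there, so the theorem recalled above (part (2), with $m$ replaced by $k$) says $\bar S$ is not of type $\wFP_k$. It remains to propagate this failure up the extension $1\to K\to S\to\bar S\to 1$, where $K=S\cap R$ and $R=\prod_{j\notin T}\Lambda_j$; note that every finite-index subgroup of $\bar S$ is the image of a finite-index subgroup of $S$ that contains $K$. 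One peels the factors of $R$ off one at a time: put $S^{(0)}=S$, let $S^{(i+1)}$ be the image of $S^{(i)}$ under the projection forgetting one more factor of $R$ (so $S^{(m-k)}=\bar S$), and let $N_i=\ker(S^{(i)}\to S^{(i+1)})$, a normal subgroup of the limit-group factor forgotten at that step. Running the Lyndon--Hochschild--Serre spectral sequence of $1\to N_i\to S^{(i)}\to S^{(i+1)}\to 1$: if $N_i$ is of type $\FP_{k-1}$ and, after passing to finite index, $S^{(i+1)}$ acts trivially on $H_q(N_i;\Z)$ for $q\le k-1$, then finite generation of the homology of $S^{(i)}$ in degrees $\le k$ forces the same for $S^{(i+1)}$. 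Descending all the way to $\bar S$ would then contradict the previous paragraph. Since each $N_i$ is a normal subgroup of a non-abelian limit group, this runs cleanly whenever $N_i$ is finitely generated: then $N_i$ has finite index, so is of type $\FP_\infty$, and its homology is finitely generated and carries an action that a finite-index subgroup trivialises.

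\textbf{The main obstacle.} The hard step is exactly this last one: when some $N_i$ fails to be finitely generated the naive transfer breaks down --- as the Stallings--Bieri groups show, an infinitely generated normal kernel need not obstruct high finiteness --- and one must show directly that the ``excess'' non-surjective projection forces the rational homology of some finite-index subgroup of $S$ to be infinite-dimensional in degree precisely $k$. This degree-sensitive infinite-dimensionality is the real content of the arguments of Kochloukova \cite{Koch} and Kuckuck \cite{Kuckuck}; it rests on a close analysis of the relevant spectral sequences together with finiteness properties of modules over the group rings of limit groups, and behind that the structure theory (graph-of-groups decompositions and actions on trees) of limit groups. By contrast the forward implication is, once one observes that the amalgamating quotients arising here are virtually nilpotent, essentially a repackaging of existing fibre-product technology.
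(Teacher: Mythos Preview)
The paper does not contain a proof of this theorem: it is quoted as a combined result of Kochloukova \cite{Koch} and Kuckuck \cite{Kuckuck} and used as a black box in Section~\ref{s:proof}. There is therefore no ``paper's own proof'' to compare your proposal against.

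That said, your outline is a fair summary of how the cited papers proceed. The forward implication is indeed Kuckuck's contribution, and your inductive fibre-product scheme is essentially his argument; one small slip is the phrase ``$D/S$ is virtually nilpotent'': $S$ is not normal in $D$, so this quotient is not a group. What you need (and what your argument actually uses) is that $Q=\Lambda_m/(S\cap\Lambda_m)$ is virtually nilpotent, which follows because $\gamma_{m-1}(D_0\cap\Lambda_m)\subset\gamma_{m-1}(D_0)\cap\Lambda_m\subset S\cap\Lambda_m$. For the backward implication you are honest that the substantive content --- producing infinite-dimensional homology in the exact degree $k$ when some projection has infinite index, even when the intermediate kernels $N_i$ are infinitely generated --- lies in Kochloukova's analysis, and you do not reproduce it; so your proposal is really a roadmap rather than a proof of that direction.
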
 

Henceforth we shall abbreviate the phrase ``projects to a subgroup of finite index in"  to ``virtually surjects to".
 
\section{Proof of Theorem \ref{t:main}} \label{s:proof}

With the results of the previous sections in hand, most of the assertions of Theorem \ref{t:main} will follow once we
verify that $B_0(m)<F^m$ surjects to pairs of factors but does not virtually surject certain triples, while
 $B_1(m)<F^m$ surjects all triples of factors but does not virtually surject certain  $4$-tuples.
 
 \medskip
 \noindent{\em Notation:} We write $L_i\cong F$ for the $i$-th summand of $F^m$ throughout this section. 

\subsection{Proof that $B_0(m)<F^m$ projects onto pairs of factors}\label{s:not-triples} 

Consider columns $p$ and $q$ of the array $A(m)$, with $p<q$. Since $p<q$, there is at least one bit in the binary expansion
of $p$ where it has $0$ while $q$ has $1$; if this place is bit $k$, then in row $k$ we have $(\e_k(p),\e_k(q))=(0,1)$,
hence $a_{ik}$ projects to $(1,a_i)\in L_p\times L_q$. 
Since $p\neq 0$, there is also a non-zero bit in its expansion; if this
is place $l$, then $(\e_l(p),\e_l(q))\in\{(1,0),\, (1,1)\}$, hence   $a_{il}$ maps to either $(a_i,1)$ or $(a_i,a_i)$ in $L_p\times L_q$.
In either case, we see that $\{(a_i,1),\,(1,a_i),\, (a_i,a_i)\}$ is contained in the projection of $B_0(m)$.
This holds for $i=1,\dots,r$ and all $1\le p< q\le m$, so $B_0(m)$ projects onto each pair of factors, as claimed.\qed

\begin{remark}\label{remark}
 Having worked through the above proof with the original notation, we shall abbreviate several of
 the similar proofs that follow by noting that the essential
feature of $A(m)$ was that the subgroup $C<\mathbb{Z}^m$ generated by the row vectors 
$\e_i$ projects onto the summands $\Z^2<\Z^m$ corresponding to
each pair of coordinates. 
\end{remark}

In the spirit of the above remark, the key fact for the following lemma is that the projection of $C$ to $\mathbb{Z}^3\times 1<\Z^m$ only
has dimension $2$.

\begin{lemma}
$B_0(m)<F^m$ does not virtually surject to certain triples of factors.  
\end{lemma}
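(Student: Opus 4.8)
The plan is to pass to abelianisations and reduce the assertion to a one‑line rank computation, in the spirit of Remark~\ref{remark}. Write $\ell=\lfloor 1+\log_2 m\rfloor$ for the number of rows of $A(m)$, let $C<\Z^m$ be the subgroup generated by the row vectors $\e_0,\dots,\e_{\ell-1}$, and for a triple $1\le p<q<s\le m$ let $\pi_{pqs}\colon\Z^m\to\Z^3$ be projection to the coordinates $p,q,s$. I will show that if $B_0(m)$ virtually surjects onto $T:=L_p\times L_q\times L_s$, then $\pi_{pqs}(C)$ has finite index in $\Z^3$. The ``certain triples'' of the lemma are then precisely those for which $\pi_{pqs}(C)$ has rank at most $2$ -- equivalently, those for which the columns $p,q,s$ of $A(m)$ are $\Q$-linearly dependent. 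The cleanest instance is $(p,q,s)=(1,2,3)$, available since $m\ge 3$: the first three columns of $A(m)$ read $1,0,0,\dots$ and $0,1,0,\dots$ and $1,1,0,\dots$ from the top (all entries below the second row vanish, as $1,2,3<4\le 2^\ell$), so $\pi_{123}(C)=\<(1,0,1),(0,1,1)\>$ has rank $2$. More generally, any triple in which $p$ and $q$ have disjoint binary expansions and $s=p+q$ will do, since then the $s$-th column of $A(m)$ is the sum of the $p$-th and the $q$-th.

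To prove the reduction, fix the basis element $a_1$ of $F$ and let $\rho\colon F^m\to\Z^3$ be the composite of the coordinate projection $F^m\to T$ with the homomorphism $T=F^3\to\Z^3$ that reads off, in each of the three coordinates, the exponent sum of $a_1$. Then $\rho$ is surjective, and by construction $\rho(a_{1j})=(\e_j(p),\e_j(q),\e_j(s))$ while $\rho(a_{ij})=0$ for $i\ne 1$; hence $\rho(B_0(m))=\pi_{pqs}(C)$. Now if $B_0(m)$ virtually surjects onto $T$ -- that is, if $p_T(B_0(m))$ has finite index in $T$ -- then its image under the surjection $T\to\Z^3$ has finite index in $\Z^3$, since a surjective homomorphism carries finite-index subgroups to finite-index subgroups; and that image is $\rho(B_0(m))=\pi_{pqs}(C)$. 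Taking $(p,q,s)=(1,2,3)$, the subgroup $\pi_{123}(C)$ has rank $2$, hence infinite index in $\Z^3$, so $B_0(m)$ does not virtually surject onto $L_1\times L_2\times L_3$.

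The only point needing care is the logic of the reduction, and the reason for routing the finiteness of index through the genuine quotient $T\to\Z^3$: this way one needs nothing about nilpotent groups beyond the triviality that surjections preserve finite index. (One could instead work through $T^{\rm{ab}}$: since $B_0(m)$ projects onto all pairs of factors, $p_T(B_0(m))$ contains $\gamma_{m-1}(T)$ by Lemma~\ref{l:easy-proj}, and a subgroup of the finitely generated nilpotent group $T/\gamma_{m-1}(T)$ has finite index precisely when its image in $T^{\rm{ab}}$ does; but the direct route avoids this.) Otherwise the proof is just the elementary computation above, and the same template -- producing a quotient $\Z^k$ of a $k$-tuple of factors on which the binary subgroup has infinite-index image -- will recur for $B_1(m)$ and for the general binary subgroups.
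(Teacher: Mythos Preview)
Your proof is correct and takes essentially the same approach as the paper: choose the triple $(1,2,3)$, pass to an abelian quotient of $L_1\times L_2\times L_3$, and observe that the image of $B_0(m)$ has deficient rank and hence infinite index. The paper uses the full abelianisation $H_1(L_1\times L_2\times L_3,\Z)\cong\Z^{3r}$ (image of rank $2r$) rather than your further quotient $\Z^3$ tracking only the $a_1$-exponent sums, but this is a cosmetic difference.
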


\begin{proof}
The non-zero entries in columns $1,2,3$ are concentrated in two rows, namely $\e_0$ and $\e_1$, so the projection of $B_0(m)$
to $H_1(L_1\times L_2\times L_3, \Z)\cong\Z^{3r}$ has rank at most (in fact exactly) $2r$.
\end{proof}

\subsection{Proof that $B_1(m)<F^m$ projects to triples}

Consider columns $p<q<s$ of the array $A(m)$. The essential point to prove this time is that we get a basis for 
$\mathbb{Z}^3$ by adjoining the element  $(1,1,1)$ (coming from $\delta_i=(a_i,\dots,a_i)$)
to the elements $(\e_j(p), \e_j(q), \e_j(s))_j$ obtained by reading the entries in columns $p,q,s$ in each
row of the array.

Since $p<s$, there is at least one place in the binary expansion
of $p$ where it has $0$ while $s$ has $1$; if this place is bit $k$, then from columns $p,q,s$ of row $k$ we have either $(0,1,1)$
or $(0,0,1)$. Similarly, as $p<q$, in some row we get either $(0,1,1)$ or $(0,1,0)$. If we get two distinct vectors 
from this argument, then together with $(1,1,1)$ (coming from $\delta_i$)
they will span $\mathbb{Z}^3$. This fails if we get $(0,1,1)$ twice, in which case
we use the condition $q<s$ to get a vector $(1,0,1)$ or $(0,0,1)$: adding either to $\{(1,1,1),\, (0,1,1)\}$ gives a basis for $\mathbb{Z}^3$. \qed

\begin{lemma}\label{l:not4} If $m\ge 5$ then
$B_1(m)<F^m$ does not virtually surject to certain 4-tuples of factors.  
\end{lemma}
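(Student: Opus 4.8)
The plan is to show that for $m\ge 5$ there exist four coordinates $i_1<i_2<i_3<i_4$ such that the projection of $B_1(m)$ to $L_{i_1}\times L_{i_2}\times L_{i_3}\times L_{i_4}$ does not have finite index, and by Corollary \ref{c:H_1enough} it suffices to find such a quadruple for which the induced map to $H_1$ of the quadruple, which is $\Z^{4r}$, fails to be surjective. Following Remark \ref{remark}, everything reduces to a statement about the subgroup $C<\Z^m$ spanned by the rows $\e_j(m)$ together with the all-ones vector $\one$ (coming from the diagonal elements $\delta_i$): I need to produce four columns $i_1<i_2<i_3<i_4$ such that the projection of $C$ to the corresponding $\Z^4<\Z^m$ has rank at most $3$, equivalently the four column vectors read off from $A(m)$ together with $(1,1,1,1)$ span a proper subspace of $\Q^4$.

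First I would take the columns $i_1=1,i_2=2,i_3=3,i_4=4$. Reading the binary array, column $1$ is $(1,0,0,0,\dots)$, column $2$ is $(0,1,0,0,\dots)$, column $3$ is $(1,1,0,0,\dots)$, column $4$ is $(0,0,1,0,\dots)$; so the only rows in which columns $1,2,3,4$ are not all zero are $\e_0,\e_1,\e_2$. Hence the projection of $C$ to these four coordinates is spanned by the three vectors $(1,0,1,0),(0,1,1,0),(0,0,0,1)$ coming from $\e_0,\e_1,\e_2$ together with $(1,1,1,1)$ coming from $\one$. But $(1,1,1,1)=(1,0,1,0)+(0,1,1,0)-(0,0,0,1)+(0,0,0,2)$ — more simply, $(1,1,1,1)-(1,0,1,0)-(0,1,1,0)=(0,0,-1,1)$, which is not in the span of $(0,0,0,1)$, so let me instead just check the rank directly: the four generating vectors are $(1,0,1,0),(0,1,1,0),(0,0,0,1),(1,1,1,1)$, and $(1,1,1,1)-(1,0,1,0)-(0,1,1,0)-(0,0,0,1)=(0,0,-1,0)$. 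That shows $(0,0,1,0)$ lies in the span, and together with the four obvious vectors this still only gives rank $4$ — so I must be more careful about which quadruple to choose. The correct observation is that columns $1,2,3$ have their support entirely in rows $\e_0,\e_1$, so if I pick $i_1,i_2,i_3\in\{1,2,3\}$ and $i_4$ \emph{anything}, the projection of $C$ is spanned by: the two vectors from $\e_0,\e_1$ (which involve all four coordinates), the vectors from the other rows (which are zero in the first three coordinates, i.e. of the form $(0,0,0,\ast)$), and $\one$. The vectors of the form $(0,0,0,\ast)$ together span at most a $1$-dimensional space, so the total rank is at most $2+1+1=4$; this is not yet a contradiction. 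To actually get rank $\le 3$ I should instead choose $i_4$ so that column $i_4$ also has support only in $\e_0,\e_1$ — but $1,2,3$ are the only such columns, so with $m\ge 5$ I cannot get a fourth. Hence the genuine argument must exploit the diagonal differently: replace $C$ by the quotient $C/\langle\one\rangle$, equivalently work in the hyperplane coordinates $x_{i_1}-x_{i_4},x_{i_2}-x_{i_4},x_{i_3}-x_{i_4}$; then adjoining $\one$ is free, and I just need three columns $i_1,i_2,i_3$ and one more $i_4$ such that the images of the row vectors in this $3$-dimensional difference space fail to span. Taking $i_4=4$ and $i_1,i_2,i_3=1,2,3$: the rows $\e_0,\e_1$ give $(1,0,1),(0,1,1)$ in difference coordinates (since $\e_j(4)=0$ for $j=0,1$), and rows $\e_2,\e_3,\dots$ give $(0,0,0)-\e_j(4)(1,1,1)$, i.e.\ $(-1,-1,-1),(-1,-1,-1),\dots$ or $(0,0,0)$; every such vector is a multiple of $(1,1,1)=(1,0,1)+(0,1,1)-(0,1,1)$... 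I see that $(1,1,1)$ is \emph{not} in the span of $(1,0,1),(0,1,1)$, so this gives rank $3$ and again no contradiction.

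So the real difficulty, and the step I expect to be the main obstacle, is identifying the correct quadruple: I would look for four columns $i_1<i_2<i_3<i_4$ whose binary expansions, as bit-strings, are \emph{affinely dependent over $\F_2$ in a way that persists over $\Q$} — concretely, four columns such that in every row the four entries sum to an even number and moreover the only rows with nonzero entries are few enough. The cleanest candidates are $\{1,2,4,7\}$: in binary, $1=001$, $2=010$, $4=100$, $7=111$, and $001+010+100=111$, so over $\Q$ the four column vectors $c_1,c_2,c_4,c_7$ (truncated to rows $\e_0,\e_1,\e_2$, all higher rows being zero in these columns precisely when $m<8$; for $m\ge 8$ one uses a shifted copy) satisfy $c_1+c_2+c_4=c_7$, and together with $\one$ they span a space of dimension at most $3$ inside $\Q^4$ — indeed $\{c_1,c_2,c_4\}$ spans a $3$-space, $c_7$ is in it, and $\one = c_1+c_2+c_4$ only after checking the all-ones coordinate, which works since each of columns $1,2,4,7$ is nonzero in exactly the rows forced by its binary digits. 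I would therefore: (i) for $5\le m\le 7$ take the quadruple $\{1,2,4,7\}$ is unavailable (no column $7$), so instead take $\{1,2,3,4\}$ and verify directly using the difference-coordinate computation above that combined with a parity obstruction the image in $H_1$ misses an index-$2$ subgroup — more precisely the image is contained in the subgroup of $\Z^{4r}$ where a certain $\F_2$-linear functional vanishes; (ii) for $m\ge 8$, locate an appropriate triple-plus-one of columns realizing the $\F_2$-dependency $x\oplus y\oplus z = w$ among $\{1,\dots,m\}$ (which always exists, e.g.\ $\{1,2,3,m'\}$-type configurations or the Hamming-code relation), noting columns $1,2,3$ have the dependency $001\oplus 010 = 011$; (iii) translate the $\F_2$-dependency into the statement that the projection of $C+\langle\one\rangle$ to the four chosen coordinates omits an index-$2$ subgroup of $\Z^4$, hence by Corollary \ref{c:H_1enough} $B_1(m)$ does not virtually surject that $4$-tuple. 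The heart of the matter is thus the combinatorial fact that among any $m\ge 5$ distinct positive integers one can find four whose binary expansions are linearly dependent over $\F_2$ in the presence of the all-ones vector — equivalently, the binary code has minimum distance behaviour forcing such a configuration — and I would isolate this as the key lemma, proving it by an explicit choice (columns $1,2,3$ plus the parity relation, or columns $\{1,2,4,7\}$ when $m\ge 7$) rather than a general position argument.
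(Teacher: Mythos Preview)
Your reduction to linear algebra is correct: by Corollary~\ref{c:H_1enough} it suffices to find four columns $i_1,i_2,i_3,i_4$ such that the row vectors $(\e_j(i_1),\e_j(i_2),\e_j(i_3),\e_j(i_4))$ together with $(1,1,1,1)$ span a proper subspace of $\Q^4$. But neither of your candidate quadruples works, and the vague ``parity obstruction'' you invoke for $\{1,2,3,4\}$ is not there.

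For $\{1,2,3,4\}$ you yourself computed that $(1,1,1,1)-(1,0,1,0)-(0,1,1,0)-(0,0,0,1)=(0,0,-1,0)$; combined with $(0,0,0,1)$, $(1,0,1,0)$, $(0,1,1,0)$ this gives \emph{all} of $\Z^4$, so $B_1(m)$ genuinely surjects this $4$-tuple. For $\{1,2,4,7\}$ the column relation $c_1+c_2+c_4=c_7$ says each restricted row vector lies in the hyperplane $\{x_1+x_2+x_3=x_4\}$, but $(1,1,1,1)$ does \emph{not} lie there ($1+1+1\neq 1$), so adjoining it gives rank $4$, not $3$.

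The missing idea is that you need a $\Z$-linear dependency $\sum \alpha_\ell\, c_{i_\ell}=0$ among four columns whose coefficient vector $(\alpha_1,\alpha_2,\alpha_3,\alpha_4)$ also annihilates $(1,1,1,1)$, i.e.\ has $\sum\alpha_\ell=0$. The paper takes columns $2,3,4,5$: since $2+5=3+4=7$ as binary strings, one has $c_2+c_5=c_3+c_4$, a dependency with coefficients $(1,-1,-1,1)$ summing to zero. Hence every restricted row vector \emph{and} $(1,1,1,1)$ lie in the hyperplane $\{x_1-x_2-x_3+x_4=0\}$, giving rank $3r$ in $H_1(L_2\times L_3\times L_4\times L_5,\Z)\cong\Z^{4r}$. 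This works for every $m\ge 5$ in one stroke, with no case division.
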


\begin{proof} We focus on columns $2$ to $5$ of the array $A(m)$
and argue that the projection of $B_1(m)<F^m$ to  $Z:=H_1(L_2\times L_3\times L_4\times L_5,\Z)\cong\Z^{4r}$ only
has rank $3r$. 

To see this, consider the rank-$4$ summand $Z_1<Z$ that is spanned by the images of 
$(a_1,1,1,1),$ $(1,a_1,1,1),\ (1,1,a_1,1),\ (1,1,1,a_1).$
The only rows of  $A(m)$ that have non-zero entries in columns $2,3,4,5$ are $\e_0, \e_1, \e_2$, where we have
$$ (0,1,0,1),\ (1,1,0,0),\ (0,0,1,1).$$
Thus the projection of $\< a_{1j} \mid j=0,1,2,\dots\>$ to $Z_1$ has rank $3$.
The diagonal element $\delta_1$ contributes $(1,1,1,1)$, but we already have this as the sum of the images of $a_{11}$ and $a_{12}$. A similar
argument applies to the summand $Z_i<Z$ corresponding to the generator $a_i\in F$. Thus the image of 
$B_1(m)$ in $Z$ has rank $3r$, as claimed. 
\end{proof} 

\subsection{The remainder of the proof} In the light of the preceding results in this section,
items (3) and (4) of Theorem \ref{t:main}
are covered by Lemma \ref{l:easy-proj}, while items (5) and (7) are
covered by Theorems \ref{t:VSP} and  \ref{t:KK}. We shall prove the remaining items in order.

Let $k= \lfloor 1+\log_2 m\rfloor$. We defined $B_0(m)$ by taking a generating set $a_{ij}$ with $rk$ elements, so
certainly $d(B_0(m))\le rk$. To see that $d(B_0(m))= rk$, we observe that $B_0(m)$ maps onto a direct product of $k$ copies of $F=F_r$:
the desired map is the coordinate projection onto $\Lambda^{[2]}:=L_1\times L_2 \times L_4\dots\times L_{2^{k-1}}$; in this projection,
$a_{ij}$ maps to $a_i\in L_{2^j}$. This proves assertion (1) of the theorem.

The proof of item (2) is similar. In this case, to see that  {{$B_1(m)$}} requires at least {{$r(k+1)$}} generators
we prove that  its projection to the abelianization of $\Lambda^{[2]}\times L_3$ is surjective. The new
difficulty that we face is that in the rank-$3$ summand of
$V_i<H_1(L_1\times L_2\times L_3, \Z)$ corresponding to the basis element $a_i\in F$, 
the only contributors among the $a_{ij}$ are $a_{i0}$
and $a_{i1}$, who contribute $(1,0,1,0,...)$ and $(0,1,1,0,...)$ respectively. 
But the element $\delta_i\prod_{j>1}a_{ij}^{-1}$ contributes $(1,1,1,0,...)$ to $\Lambda^{[2]}\times L_3$, ensuring that the
image of $B_1(m)$ contains $V_i$. And  $a_{ij}$ maps to $a_i\in L_{2^j}$ for $j>1$, as before. 
Thus  $B_1(m)$ maps onto
$H_1(\Lambda^{[2]}\times L_3, \Z)\cong \Z^{r(k+1)}$, so
 $d(B_1(m)) \ge r(k+1)$.

The proof of item (6) is similar to that of item (2). The three rows of $A(4)$ are $(1,0,1,0),$ $ (0,1,1,0),\, (0,0,0,1)$ and together
with $\delta=(1,1,1,1)$ these span $\Z^4$. In the spirit of remark \ref{remark}, this translates easily into the fact that 
$B_1(4)$ contains each of the $4r$ generators of $F^4$. Alternatively, we can observe (i) that  $B_1(4)$ maps onto
$H_1(F^4,\Z)$ because it contains each of the $r$ summands $\Z^4$ corresponding to a choice of basis vector, and (ii)
$B_1(4)$ contains $[F^4,F^4]$,  by Lemma \ref{l:easy-proj}.

Item (8) is an immediate consequence of (2) and Proposition \ref{p:poly}. In more detail, 
since $h(r,c)\le\sum_{i=1}^cr^i$ and $\lfloor 2 + \log_2 m\rfloor > \log_2 m$, it suffices
to define $p_c(t) = \frac{1}{W_c(2)}\sum_{i=1}^ct^i$.
\qed  

\subsection{Binary subgroups of direct products of limit groups}

Let  $\Lambda_1,\dots,\Lambda_m$ be  non-abelian limit groups, and suppose that there exist epimorphisms $\mu_i:F_r\to \Lambda_i$
that induce an isomorphism on abelianizations. Of particular interest is the case
where each $\Lambda_i$ is the fundamental group of a closed surface of
genus $g$, with $r=2g$ and $\mu_i:F_r\to \Lambda_i$ a standard choice of generators.  
Let {{$B_j(m; \mu)<D=\Lambda_1,\dots,\Lambda_m$}} be the image of $B_j(m)$ under $\mu = (\mu_1,\dots,\mu_m)$.

\begin{theorem} \label{t:limit} With the above notation, for all non-abelian limit groups $\Lambda_i$ such that 
$\mu_i:F_r\to \Lambda_i$ induces an isomorphism on $H_1(-,\Z)$:
\begin{enumerate}
\item the rank of {{$B_0(m;\mu)$}} is {{$r(\lfloor 1+\log_2 m\rfloor)$}};
\item  the rank of {{$B_1(m;\mu)$}} is {{$r(\lfloor 2+\log_2 m\rfloor)$}}; 
\item  {{$B_0(m;\mu)$}} contains {{$\gamma_{m-1}(D)$}};
\item  {{$B_1(m;\mu)$}} contains $\gamma_{c}(D)$, where
{{$c =  \lfloor 1+(m-1)/2\rfloor$}};
\item  {{$B_0(m;\mu)$}} is {{finitely presented}} but {{not}} of type {{${\rm{wFP}}_3$}};
\item  if $m\le 4$ then {{$B_1(m;\mu)=D$}};
\item if $m\ge 5$ then {{$B_1(m;\mu)$}} is {{finitely presented}}, type ${\rm{wFP}}_3$ but {{not ${\rm{wFP}}_4$}}; 
\item  {{$\forall c\ \exists$ polynomial $p_c(t)$}} such that if $m> p_c(\log_2 m)$ and $D_0<\Lambda_1\times\dots\times\Lambda_m$
is of finite index, then $ {{\gamma_c(D_0)\not\subseteq B_1(m)}}$.   
\end{enumerate}
\end{theorem}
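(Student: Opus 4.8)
The plan is to reduce every item of Theorem \ref{t:limit} to the corresponding item for $B_j(m)<F^m$ established in Section \ref{s:proof}, using two elementary facts. First, pushing a subgroup forward along a surjection preserves both ``projects onto each $k$-tuple of factors'' and ``contains $\gamma_c$ of the whole product'' (Lemma \ref{l:push-and-pull}). Second, $\mu=(\mu_1,\dots,\mu_m)\colon F^m\onto D$ is onto and induces an isomorphism on $H_1(-,\Z)$ of $D$ and of every sub-product of its factors, because each $\mu_i$ does. Since $B_j(m;\mu)=\mu(B_j(m))$, Lemma \ref{l:push-and-pull} at once yields items (3) and (4) (combined with Lemma \ref{l:easy-proj}, exactly as for $B_j(m)$), item (6) (for $m\le4$ one has $\mu(F^m)=D$), and the fact that $B_0(m;\mu)$ projects onto each pair of factors while $B_1(m;\mu)$ projects onto each triple. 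In particular each $B_j(m;\mu)$ is a \emph{full} subdirect product of non-abelian limit groups: it surjects every $\Lambda_i$, and meets $\Lambda_i$ non-trivially because it contains a term of the lower central series of $D$ and $\Lambda_i$ is non-abelian. This is exactly what is needed for the VSP Theorem \ref{t:VSP} and Theorem \ref{t:KK} to apply.

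For the rank items (1) and (2) I would transcribe the argument of Section \ref{s:proof}. The upper bounds are immediate: $B_j(m;\mu)$ is generated by the images of the $rk$, respectively $r(k{+}1)$, standard generators of $B_j(m)$, where $k=\lfloor 1+\log_2 m\rfloor$. For the matching lower bounds, recall that in the free case $B_0(m)$ maps onto the first homology $\Z^{rk}$ of a suitable $k$-tuple of factors, and $B_1(m)$ maps onto the first homology $\Z^{r(k+1)}$ of a suitable $(k{+}1)$-tuple (with the same analysis of which generators contribute which homology classes). Since $\mu$ restricts to an $H_1$-isomorphism on each such sub-product, $B_j(m;\mu)$ has the same, hence full, image in the first homology of the corresponding sub-product of the $\Lambda_i$; thus $d(B_j(m;\mu))$ has the stated value.

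The finiteness items (5) and (7) follow from the structural theorems. Because $B_0(m;\mu)$, respectively $B_1(m;\mu)$, surjects --- and hence virtually surjects --- every pair of factors, part (1) of the VSP Theorem \ref{t:VSP} gives finite presentability, and for $B_1(m;\mu)$ Theorem \ref{t:KK} with $k=3$ gives type $\wFP_3$. For the negative halves, I would show, just as in Lemma \ref{l:not4} and its counterpart for triples, that $B_0(m;\mu)$ does not virtually surject the distinguished triples and that $B_1(m;\mu)$ (for $m\ge5$) does not virtually surject the distinguished $4$-tuples: a subgroup that virtually surjects a sub-product $T$ has finite-index, hence full-rank, image in $H_1(T,\Z)$, and since $\mu$ is an $H_1$-isomorphism the image of $B_j(m;\mu)$ in $H_1(T,\Z)$ has precisely the rank computed in the free case, namely $2r<3r$, respectively $3r<4r$. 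Theorem \ref{t:KK} then denies type $\wFP_3$, respectively $\wFP_4$. Finally, item (8) follows from item (2) and Proposition \ref{p:poly} word for word, using in addition that every non-abelian limit group surjects onto a non-abelian free group.

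The only genuinely new content, beyond re-running Section \ref{s:proof}, is the observation that all the invariants in play --- surjectivity onto $k$-tuples, containment of terms of the lower central series, and the $H_1$-rank of projections --- are unchanged under push-forward along an $H_1$-isomorphism, together with the verification that $B_j(m;\mu)$ is a full subdirect product of non-abelian limit groups. The step I expect to need the most care is the ``virtual surjection'' analysis underpinning items (5) and (7): one must check that weakening surjection to finite-index surjection does not circumvent the homological obstruction, and that the particular triples and $4$-tuples that fail over $F^m$ still fail after push-forward --- which they do precisely because the obstruction is a deficit in $H_1$-rank, an invariant that $\mu$ preserves.
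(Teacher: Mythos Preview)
Your proposal is correct and is essentially a fleshed-out version of the paper's own proof, which consists of the single sentence ``The proofs of Theorem \ref{t:main} and the supporting results were crafted so that they extend \emph{mutatis mutandis} to this setting.'' You have accurately identified what the \emph{mutatis mutandis} amounts to: push-forward along $\mu$ preserves surjection onto $k$-tuples and containment of lower-central-series terms (Lemma \ref{l:push-and-pull}), while the hypothesis that each $\mu_i$ is an $H_1$-isomorphism ensures that the homological rank obstructions from Section \ref{s:proof} survive unchanged, so that Theorems \ref{t:VSP} and \ref{t:KK} apply exactly as before.
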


The polynomial $p_c(t)$ depends only on $c$, not the particular $\Lambda_i$ considered: again we may take
$p_c(t) = \frac{1}{W_c(2)}\sum_{i=1}^ct^i$.

\begin{proof}
The proofs of Theorem \ref{t:main} and the supporting results were crafted so that they extend {\em{mutatis mutandis}} to this
setting.
\end{proof} 

In fact, our earlier proofs apply to more general settings: the interested reader will have little
difficulty in verifying the following observations. (Lemma \ref{l:push-and-pull} is
needed here.)

\begin{addenda} \begin{enumerate}
\item[(i)] If one drops the requirement that $\mu$ induces an isomorphism on $H_1(-,\Z)$, then items
(3), (4), (6) and (8) of Theorem \ref{t:limit} remain valid while the equalities in (1) and (2) are replaced by inequalities
$d(B_j(m;\mu))\le r(\lfloor j+1+\log_2 m\rfloor)$. The positive 
assertions in (5) and (7) remain valid but the negative assertions may fail.

\item[(ii)]
If one weakens the hypotheses further, requiring only that each $\Lambda_i$ is an $r$-generator
 finitely presented group that maps onto a non-abelian free group, then the same set of conclusions as in {\rm{(i)}} remains valid. 
 
 \item[(iii)]
If one weakens the hypotheses yet again, requiring only that each $\Lambda_i$ is an $r$-generator finitely presented group, then  the only additional
item to become invalid is (8).
\end{enumerate}
\end{addenda}

\section{Binary Subgroups: the general construction}\label{s:general}

We maintain the notation that $F$ is the free group of rank $r$ with basis $\{a_1,\dots,a_r\}$.

First we consider arbitrary binary subgroups of direct powers of free groups.

\begin{definition} \label{d:d1}
Let $m\ge 2$ and $r\ge 2$ be integers. 
For $i=1,\dots,r$, let $\sigma_i=(x_{i1},\dots,x_{im})$ be a list of distinct positive integers. 
Let $l_i = \max_k\ \lfloor 1+ \log_2 x_{ik}$. 
We associate to $\sigma_i$ the $l_i$-by-$m$ matrix $A[\sigma_i]$ whose $k$-th column is
the binary expansion of $x_{ik}$. More
precisely, the $(j,k)$-entry of $A[\sigma_i]$ is $\e_j(k)$ if $x_{ik}=\sum_j\varepsilon_j(k)2^j$. 
We treat the $j$-th row of $A[\sigma_i]$ as a multi-index and  define
$a_{ij}\in F^m$ by raising the generator $a_i\in F$ to this multi-index (cf.~Table 2).

Let  $\Sigma = (\sigma_1,\dots,\sigma_r)$. We define
$B(\Sigma)<F^m$ to be the subgroup generated by $\{a_{ij} \mid 1\le i\le r,\ 1\le j\le l_i\}$.
\end{definition}

\begin{example}
$B_0(m)$ is the group one gets by taking $\sigma_1=\dots=\sigma_r$ and $x_{ik}=k$.
\smallskip

$B_1(m)$ is the group one gets by taking $\sigma_1=\dots=\sigma_r$ and $x_{ik}=2k+1$.  
\end{example}

The proof in Section 4.1 that $B_0(m)$ surjects pairs of factors applies equally well to $B(\Sigma)$.

\begin{lemma}\label{l:more}
The projection of $B(\Sigma)<F^m$ to each pair of factors is surjective.
\end{lemma}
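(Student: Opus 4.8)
The plan is to reduce the statement about $B(\Sigma)<F^m$ to the purely linear-algebraic fact already isolated in Remark \ref{remark}. Recall that $B(\Sigma)$ is generated by the elements $a_{ij}$ obtained by raising $a_i\in F$ to the multi-index given by the $j$-th row of the matrix $A[\sigma_i]$. Fixing a pair of columns $p<q$ and looking at the $i$-th block of generators, the images of the $a_{ij}$ in $L_p\times L_q$ are determined entirely by the pairs of bits $(\e_j(p),\e_j(q))$ as $j$ ranges over the rows of $A[\sigma_i]$; these pairs depend only on the binary expansions of the two \emph{distinct} positive integers $x_{ip}$ and $x_{iq}$ sitting in columns $p$ and $q$ of $\sigma_i$.

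First I would fix $i$ and the pair $p<q$, set $x=x_{ip}$ and $y=x_{iq}$, and observe that since $x\neq y$ there is a bit position $k$ where they differ; say without loss of generality $\e_k(x)=0$ and $\e_k(y)=1$ (the other case is symmetric, interchanging the roles of $p$ and $q$). Then $a_{ik}$ projects to $(1,a_i)\in L_p\times L_q$. Next, since $x_{ip}=x\ge 1$ is a positive integer, it has some non-zero bit, say in position $l$, so $\e_l(x)=1$ and $\e_l(y)\in\{0,1\}$; hence $a_{il}$ projects to either $(a_i,1)$ or $(a_i,a_i)$. In either case, combining this with $(1,a_i)$ shows that the projection of $\langle a_{ij}\mid j\rangle$ to $L_p\times L_q$ contains $(a_i,1)$ and $(1,a_i)$, hence the whole subgroup $L_p^{(i)}\times L_q^{(i)}$ generated by the two copies of $a_i$. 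Equivalently, in the language of Remark \ref{remark}, the subgroup of $\Z^2$ generated by the vectors $(\e_j(x),\e_j(y))_j$ is all of $\Z^2$ whenever $x\neq y$ are positive integers: one coordinate is hit because $x\neq y$, the other because $x>0$ (and symmetrically for $q$; note both $x_{ip},x_{iq}$ are positive, so if $x$ happened to have $0$ in every position where it differs from $y$, impossible since they differ, but more robustly each of $x,y>0$ supplies a non-zero bit).

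Running this argument over all $i=1,\dots,r$ shows that the projection of $B(\Sigma)$ to $L_p\times L_q$ contains every $a_i$ in both coordinates, and since $\{a_1,\dots,a_r\}$ generates $F$, this projection is all of $L_p\times L_q\cong F\times F$. As $p<q$ were arbitrary, $B(\Sigma)$ surjects onto each pair of factors. I do not anticipate a genuine obstacle here: the only mild subtlety is making sure the case analysis on which of $x_{ip},x_{iq}$ is larger, and the extraction of a non-zero bit, is handled symmetrically and uses only that the entries of each $\sigma_i$ are \emph{distinct positive} integers — exactly the hypotheses in Definition \ref{d:d1}. This is essentially verbatim the computation of Section 4.1 with the row vectors $\e_j$ of $A(m)$ replaced by the row vectors of $A[\sigma_i]$, so the proof can be stated very briefly by invoking that argument.
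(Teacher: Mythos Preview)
Your proof is correct and follows essentially the same approach as the paper: the paper simply states that the argument of Section~4.1 applies verbatim, and you have spelled out exactly that argument, adapting it to general $\Sigma$ by using the distinctness and positivity of the entries $x_{ip},x_{iq}$ in place of the inequality $p<q$. The only adjustment relative to Section~4.1 is the symmetric treatment of which of $x_{ip},x_{iq}$ carries the $1$ in the differing bit, which you handle correctly.
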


From the VSP Theorem we deduce:

\begin{proposition}
$B(\Sigma)<F^m$ is finitely presented and contains $\gamma_{m-1}(F^m)$.
\end{proposition}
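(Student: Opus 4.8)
The plan is to deduce this immediately from the VSP Theorem (Theorem~\ref{t:VSP}) once we know that $B(\Sigma)<F^m$ projects with finite index (in fact onto) each pair of factors. That surjectivity is exactly the content of Lemma~\ref{l:more}, whose proof is the verbatim argument of Section~\ref{s:not-triples}: for any columns $p<q$ of the array $A[\sigma_i]$ there is a bit where $x_{ip}$ has $0$ and $x_{iq}$ has $1$ (yielding $(1,a_i)$ in $L_p\times L_q$) and a bit where $x_{ip}$ has a $1$ (yielding $(a_i,1)$ or $(a_i,a_i)$), so the projection contains $\{(a_i,1),(1,a_i),(a_i,a_i)\}$ for every $i$, hence all of $L_p\times L_q$.

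Granting Lemma~\ref{l:more}, each $p_{ij}(B(\Sigma))$ equals $L_i\times L_j\cong F\times F$, so in particular has finite index. Since $F^m$ is a direct product of finitely presented (indeed finitely generated free) groups, Theorem~\ref{t:VSP} applies directly: item~(1) gives that $B(\Sigma)$ is finitely presented, and item~(2) gives $\gamma_{m-1}(D_0)<B(\Sigma)$ for some finite-index $D_0<F^m$.

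To upgrade ``$\gamma_{m-1}(D_0)$ for some finite-index $D_0$'' to ``$\gamma_{m-1}(F^m)$'' itself, I would instead invoke Lemma~\ref{l:easy-proj} with $k=2$: a subgroup of $G_1\times\dots\times G_m$ that projects onto each pair of factors contains $\gamma_c(D)$ where $c=\lceil(m-1)/(k-1)\rceil=m-1$, and here $D=F^m$. This gives $\gamma_{m-1}(F^m)<B(\Sigma)$ on the nose, with no passage to a finite-index subgroup and without even needing the deeper part of the VSP Theorem for this clause. So the proof is: cite Lemma~\ref{l:more} for surjectivity onto pairs; apply Theorem~\ref{t:VSP}(1) for finite presentability; apply Lemma~\ref{l:easy-proj} (with $k=2$) for the containment $\gamma_{m-1}(F^m)<B(\Sigma)$.

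There is essentially no obstacle here, since both ingredients are already in hand; the only thing to be careful about is not overstating the lower-central-series conclusion. If one tried to get this purely from VSP~(2) one would be left with a finite-index $D_0$ rather than $F^m$, so the point worth emphasising is that the explicit combinatorics of the binary array (surjectivity onto every pair) feeds Lemma~\ref{l:easy-proj} to pin down $\gamma_{m-1}(F^m)$ exactly.
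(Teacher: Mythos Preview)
Your proposal is correct and follows the same route as the paper, which simply says ``From the VSP Theorem we deduce'' after recording Lemma~\ref{l:more}. You are in fact slightly more careful than the paper in separating out Lemma~\ref{l:easy-proj} (with $k=2$) to obtain $\gamma_{m-1}(F^m)$ on the nose rather than only $\gamma_{m-1}(D_0)$ for a finite-index $D_0$; the paper tacitly folds this into its citation of the VSP Theorem, since Lemma~\ref{l:easy-proj} is presented there as the elementary content behind item~(2).
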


\begin{remark}[{\tt{Algorithms}}]\label{r:explore}
Following Remark \ref{r:algo}, with Lemma \ref{l:more} in hand we see that there is
an algorithm that allows one to explore, for each $k$, whether $B(\Sigma)<F^m$ (virtually)
projects onto all $k$-tuples of factors and hence to bound
the higher finiteness properties and co-nilpotency class of $B(\Sigma)$ using Theorem \ref{t:KK} and Lemma \ref{l:easy-proj}.
 \end{remark}

\subsection{Pushing forward and the proof of Theorem \ref{t:general1}}

The following defines the most general binary subgroups that we consider.

\begin{definition} \label{d:d2}
Let  $\Sigma = (\sigma_1,\dots,\sigma_r)$ be as in definition \ref{d:d1} and let 
$(\mu_i:F\onto G_i)_{i=1}^m$ be an   $m$-tuple of $r$-generator groups. We define $B(\Sigma,\mu)$
to be the image in $G_1\times\dots\times G_m$ of  $B(\Sigma)<F^m$ under the
epimorphism $\mu = (\mu_1,\dots,\mu_m)$.
\end{definition}

We invoke the VSP Theorem once more -- in tandem with Lemma \ref{l:push-and-pull} and Remark \ref{r:algo} it yields:

\begin{theorem} \label{t:general} For all $m$-tuples of finitely presented $r$-generator 
groups $(\mu_i:F\onto G_i)_{i=1}^m$ and all $r$-tuples $\Sigma$ of lists of $m$ distinct positive integers,
the subgroup
$B(\Sigma,\mu)<G_1\times\dots\times G_m=:D$ is finitely presented, closed in the profinite topology, and contains $\gamma_{m-1}(D)$.
Moreover, there is an algorithm that, given finite presentations of $G_1,\dots,G_m$, will construct a finite presentation for 
$B(\Sigma,\mu)$. And for each $k\ge 2$ there is an algorithm that will determine if $B(\Sigma,\mu)$ (virtually) surjects all $k$-tuples
of factors $G_{i_1}\times\dots\times G_{i_k}$.
\end{theorem}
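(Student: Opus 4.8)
The plan is to deduce every assertion from the VSP Theorem (Theorem~\ref{t:VSP}) via the push-forward machinery of Section~\ref{s:push-and-pull}; the only genuinely combinatorial input has already been isolated as Lemma~\ref{l:more}. Recall its content: for each $i\le r$ and each pair $p\ne q$, the integers $x_{ip}\ne x_{iq}$ differ in some binary digit and at least one of them has a non-zero digit, so reading the corresponding rows of the array $A[\sigma_i]$ exhibits $\{(a_i,1),\,(1,a_i),\,(a_i,a_i)\}$ inside the projection of $B(\Sigma)<F^m$ to the pair of factors $L_p\times L_q$; hence $B(\Sigma)$ surjects onto every pair of factors. By Lemma~\ref{l:push-and-pull} the push-forward $B(\Sigma,\mu)=\mu(B(\Sigma))$ surjects onto every pair of factors of $D=G_1\times\dots\times G_m$ as well, so $p_{ij}(B(\Sigma,\mu))=G_i\times G_j$ has finite index for all $1\le i<j\le m$, and the hypothesis of Theorem~\ref{t:VSP} is satisfied.

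First I would apply Theorem~\ref{t:VSP} with $S=B(\Sigma,\mu)$: items (1) and (3) give that $B(\Sigma,\mu)$ is finitely presented and closed in the profinite topology on $D$, and the algorithmic clause produces a finite presentation once it is handed finite presentations of the $G_i$ together with a finite generating set $\Theta$ of $S$. Here one takes $\Theta=\{\mu(a_{ij})\}$, which is computable from the input data $(\Sigma,\mu)$, since the $a_{ij}\in F^m$ are read off the binary arrays $A[\sigma_i]$ and each marking $\mu_i\colon F\onto G_i$ is specified by words in the generators of $G_i$. For the sharper conclusion $\gamma_{m-1}(D)<B(\Sigma,\mu)$ --- the VSP Theorem by itself only gives $\gamma_{m-1}(D_0)$ for some finite-index $D_0<D$ --- I would instead invoke Lemma~\ref{l:easy-proj} with $k=2$ applied to $S=B(\Sigma,\mu)$: since $S$ surjects onto each pair of factors, $\gamma_c(D)<S$ for $c=\lceil(m-1)/(2-1)\rceil=m-1$.

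It remains to supply, for fixed $k\ge 2$, the decision procedure for whether $B(\Sigma,\mu)$ (virtually) surjects a given $k$-tuple $T=G_{i_1}\times\dots\times G_{i_k}$. Using the \emph{a priori} knowledge just established --- that $B(\Sigma,\mu)$ surjects onto every pair of factors of $D$ --- I would invoke Corollary~\ref{c:H_1enough}: the projection of $B(\Sigma,\mu)$ to $T$ is onto if and only if the induced homomorphism $B(\Sigma,\mu)\to H_1(T,\Z)\cong\bigoplus_{t=1}^{k}H_1(G_{i_t},\Z)$ is onto. Both the finitely generated abelian group $H_1(T,\Z)$ and the images in it of the generators $\mu(a_{ij})$ are computable from the given presentations, and deciding whether finitely many elements generate a finitely generated abelian group is routine. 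The ``virtually surjects'' variant is the same: the image of $B(\Sigma,\mu)$ in $T$ contains $\gamma_{m-1}(T)$, and as $T/\gamma_{m-1}(T)$ is nilpotent this image has finite index in $T$ if and only if its image in $H_1(T,\Z)$ has finite index, again a finite linear-algebra computation over $\Z$ (cf.\ Remark~\ref{r:algo}).

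I do not anticipate a serious obstacle: the depth resides entirely in the cited VSP Theorem and what remains is assembly. The two points needing a little care are bookkeeping --- that the presentation-building algorithm must receive the markings $\mu_i$, not merely the abstract groups $G_i$, and that ``surjects onto'' trivially supplies the ``finite index'' hypothesis of Theorem~\ref{t:VSP} --- together with the observation that the containment $\gamma_{m-1}(D)<B(\Sigma,\mu)$ is the elementary commutator fact of Lemma~\ref{l:easy-proj}, which is stronger than what Theorem~\ref{t:VSP} alone delivers.
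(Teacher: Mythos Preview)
Your proposal is correct and follows exactly the route the paper takes: Lemma~\ref{l:more} plus Lemma~\ref{l:push-and-pull} feed the VSP Theorem, with Lemma~\ref{l:easy-proj} supplying the sharp $\gamma_{m-1}(D)$ containment and Remark~\ref{r:algo}/Corollary~\ref{c:H_1enough} handling the decision procedures. The paper compresses all of this into a single sentence (``We invoke the VSP Theorem once more --- in tandem with Lemma~\ref{l:push-and-pull} and Remark~\ref{r:algo}''), whereas you have unpacked it carefully, including the point that Lemma~\ref{l:easy-proj} rather than item~(2) of Theorem~\ref{t:VSP} is what gives $\gamma_{m-1}(D)$ rather than merely $\gamma_{m-1}(D_0)$.
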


This completes the proof of Theorem \ref{t:general1}, because
the additional assertion concerning finiteness properties of limit groups is covered by Theorem \ref{t:KK}.

\subsection{Pulling back}\label{s:5.2}
  Let $B(\Sigma,\mu)<G_1\times\dots\times G_m$ be as above. 
Given any $m$-tuple of epimorphisms $(\pi_i : H_i\to G_i)_{i=1}^m$, with the groups $H_i$ finitely presented,
 we consider the pull-back
$\pi^{-1}B(\Sigma,\mu)<E:=H_1\times\dots\times H_m$.
In the light of Lemma \ref{l:push-and-pull} and the VSP Theorem we see that $\pi^{-1}B(\Sigma,\mu)$
will again be finitely presented, closed in the profinite topology, and contain $\gamma_{m-1}(E)$.
And there is an algorithm that, given finite presentations of $H_1,\dots,H_m$, will construct a finite presentation for $\pi^{-1}B(\Sigma,\mu)$.

This construction is of particular interest in the case where the $H_i$ are limit groups and the $G_i$ are free groups.
We shall explore it further in \cite{BL}.  

\subsection{Redundancy, variations and standard forms}\label{ss:variations}

It is natural to wonder to what extent $B(\Sigma)<F^m$ determines $\Sigma$. Deciding isomorphism among 
the groups $B(\Sigma)$ does not seem to be a trivial matter. It is clear that $\Sigma$ is not uniquely determined:
permuting the order of the sets $\sigma_i$ corresponds to permuting the chosen basis of $F$, and in the case
$\sigma=\sigma_1=\dots=\sigma_m$ permuting the order of the elements in $\sigma$ merely permutes the
direct factors of $F$. (One might remove these ambiguities by insisting on a standard form in which the $\sigma_i$
and their elements are listed in a natural order.) Beyond this, even in the case $\sigma=\sigma_1=\dots=\sigma_r$ there are 
further redundancies:
if no column of the matrix $A[\sigma]$ contains an entry $1$ in both of the rows $j_1$ and $j_2$, then one can replace
row $j_1$ by the sum of these rows
 to obtain $A[\sigma']$ with $\sigma'\neq \sigma$. This operation leaves  $B(\Sigma)$ invariant, merely changing the preferred
 generating set $a_{ij}$ by performing the Nielsen move $a_{ij_1}\mapsto a_{ij_1}a_{ij_2}$ for each $i$. 

An important point to note is that when we perform such row operations, we are constrained by wanting to work within 
the context of integer matrices with entries in $\{0,1\}$. If we were to work instead over the field with
2 elements, then we would be free to perform row operations; we could then carry out Gaussian elimination to
transform the matrix $A[\sigma]$ into 
a canonical form without altering $B(\Sigma)$. This freedom can be gained at the expense of switching from the free group
$F=F_r$ to the  free product $W_r$ of $r$ copies of $\Z/2\Z$. One might be tempted to think
that this switch would lead to a neater theory, and it would certainly tighten
the connection with binary linear codes, but the following observation shows that this alternative is useless if one is
interested in constructing groups with  exotic finiteness properties.
 
We fix an epimorphism $F\to W_r$ and extend it coordinate-wise to obtain $\mu:F^m\to W_r^m$. 

\begin{proposition}\label{p:sad} For all choices of $\Sigma$,  the subgroup  $B( \Sigma,\mu)<W_r^m$  is of finite index.
\end{proposition}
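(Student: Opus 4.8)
The plan is to show that the pushed-forward subgroup $B(\Sigma,\mu)<W_r^m$ virtually surjects every pair of factors and then invoke Corollary \ref{c:useless}, since $W_r=(\Z/2\Z)^{*r}$ is generated by elements of order $2$, as is each factor $W_r$ of the product. By Corollary \ref{c:useless}, any subgroup of $W_r^m$ that projects onto each pair of factors already has finite index, so it suffices to prove the projection property --- and here we get even more for free, because Lemma \ref{l:more} (equivalently the argument of Section \ref{s:not-triples}) tells us that $B(\Sigma)<F^m$ already surjects each pair of factors, and by Lemma \ref{l:push-and-pull} this property is inherited by the push-forward $B(\Sigma,\mu)=\mu(B(\Sigma))<W_r^m$.

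So the argument is essentially immediate once the pieces are assembled: first, observe that $W_r$ is generated by the $r$ involutions that are the images of the free generators $a_1,\dots,a_r$, hence $W_r^m$ is generated by elements of finite order. Second, apply Lemma \ref{l:push-and-pull} to conclude that $B(\Sigma,\mu)$ projects onto every pair of factors $W_r\times W_r$, because $B(\Sigma)$ does (Lemma \ref{l:more}). Third, apply Corollary \ref{c:useless} with $G_1=\dots=G_m=W_r$ to conclude that $B(\Sigma,\mu)$ has finite index in $W_r^m$. One could equally phrase the last step via the chain $\gamma_{m-1}(W_r^m)<B(\Sigma,\mu)$ (from Theorem \ref{t:general}, or directly from Lemma \ref{l:push-and-pull} and the corresponding fact for $B(\Sigma)$) together with Lemma \ref{l:tor-finite}, which forces $W_r^m/\gamma_{m-1}(W_r^m)$ to be finite.

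There is no real obstacle here; the content of the proposition is conceptual rather than computational. The only point that needs a word of care is that the conclusion is uniform in $\Sigma$: nothing about the specific binary expansions enters, because the pair-projection property of $B(\Sigma)$ (Lemma \ref{l:more}) holds for every admissible $\Sigma$, and the torsion-killing phenomenon in $W_r^m$ is insensitive to which finite-index-in-pairs subgroup we started with. This is exactly the sense in which the switch to $W_r$ "is useless" for producing exotic finiteness properties, as advertised in the surrounding discussion.
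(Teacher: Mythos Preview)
Your proposal is correct and follows exactly the paper's own proof, which simply cites Lemma~\ref{l:more} and Corollary~\ref{c:useless}. The only quibble is the phrase ``virtually surjects'' in your opening sentence---Corollary~\ref{c:useless} needs actual surjection onto pairs, which is what Lemma~\ref{l:more} (and hence Lemma~\ref{l:push-and-pull}) gives you, and which you use correctly in the body of the argument.
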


\begin{proof} Follows immediately from Lemma \ref{l:more} and Corollary \ref{c:useless}.
\end{proof}
 
\section{Connections and challenges} \label{s:last}
 
$B_0(m)$ has the minimum number of generators   
among the binary subgroups of $F^m$. It is striking that one only needs to add $r$ additional generators to obtain $B_1(m)$,
which surjects all triples of factors and therefore has stronger finiteness properties. 
It is natural to ask
what the minimum number of generators is for a binary subgroup that surjects $4$-tuples of factors, $5$-tuples, {\em{etc}}.
More generally, it might be interesting to explore the nature of small collections of integers $\Sigma$ such that 
$B(\Sigma)<F^m$ maps onto $k$-tuples of factors, where $k<m$ is fixed. In particular, one might count
such collections and  explore the asymptotics of  the counting functions.

Note that Corollary \ref{c:H_1enough} reduces these issues to problems in linear algebra over $\Z$; in 
the case $\Sigma=(\sigma,\dots,\sigma)$, one would just be exploring submatrices of $A[\sigma]$, and the problem is so natural
that it has surely been studied in other contexts.
Note too that Theorem \ref{t:KK} provides a direct connection with the 
homological finiteness properties of $B(\Sigma)$.
The challenge of understanding how the co-nilpotency class of $B(\Sigma)$ varies with $\Sigma$ seems harder.

 \subsection{Binary Codes}
It seems likely that some answers to the type of question raised above
might be found by exploring the link to linear codes that I shall now sketch. I am grateful to Ben Green for suggesting this connection, and I am grateful to Yonathan Fruchter for further conversations about it.
  
A {\em binary linear code} of length $m$ is a linear subspace $C$ of the vector space $\mathbb{F}_2^m$; the {\em rank}
of the code is the dimension of $C$. The dual code $C^{\perp}<\mathbb{F}_2^m$ consists of the
vectors orthogonal to
$C$ with respect to the usual inner product $x.y = \sum x_iy_i$.
The non-zero vectors in $C$ are called codewords, and the minimum Hamming distance of a 
codeword from the zero vector is the {\em weight} of the code.  A linear code of length $m$, dimension $k$ and weight $d$
 is called an $[m,k,d]$-code. Such codes have been intensively studied in the context of error correction in computer science 
 and communication.
 
Each of the arrays that we used to define binary subgroups $B<F^m$ defines a binary code of length $m$:
 we take  $C$ to be the span of the rows of the array, read as elements of $\mathbb{F}_2^m$. Conversely, given
 an $[m,k,d]$-code with  basis $c_1,\dots,c_k$, we define the  $k\times m$ array (matrix) whose rows are the vectors $c_i$
 to be $A[\sigma]$, thereby defining $\sigma$ and hence $B[C]:=B(\Sigma)<F_r^m$, where $\Sigma = (\sigma,\dots,\sigma)$. 
 (More generally, given an $r$-tuple of codes, we could take a basis for each code to obtain matrices $A[\sigma_i]$ defining 
 $(\sigma_1,\dots,\sigma_r)$ -- but for the purposes of this discussion the first case is enough.) 
 
 Let $C_0(m)<\mathbb{F}_2^m$ be the code defined by the array for $B_0(m)<F^m$  (see Table 1). 
 The argument that we used in Section \ref{s:not-triples} to prove that $B_0(m)<F^m$ does not surject the triple of factors
 $L_1\times L_2\times L_3 <F^m$ shows that $C_0(m)$ does not project onto $\mathbb{F}_2^3\times 0< \mathbb{F}_2^m$.
 Indeed it shows that $(1,1,1,0,\dots)$ is orthogonal to $C_0(m)$, hence the dual code $C_0(m)^{\perp}$ has weight at most $3$. In
 fact, it has weight exactly $3$, because if there were a vector $y\in C_0(m)^{\perp}$ whose only non-zero coordinates were
 $i$ and $j$, then the projection of  $B_0(m)<F^m$ to $L_i\times L_j$ (even $H_1(L_i\times L_j, \mathbb{F}_2)$) would not be surjective.
Likewise, the dual of the linear code $C_1(m)<\mathbb{F}_2^m$ corresponding to $B_1(m)<F^m$ has weight $4$.
 (In retrospect, one sees that $B_0(m)$ and $B_1(m)$ correspond to the Hadamard and extended Hadamard codes, which
are dual to the Hamming and extended Hamming codes, whose weights are well-known to be $3$ and $4$.) 
 In general, if a code $C$ has length $m$ and the weight of the dual code $C^{\perp}$ is $d'>2$
 then the corresponding binary subgroup $B[C]<F^m$ fails to project onto some $d'$-tuple of factors in $F^m$
 and hence is not of type $\wFP_{d'}$. 
  
 These observations suggest that in order to produce binary subgroups $B<F^m$ with a small number of generators 
 and strong finiteness properties, one should turn to the literature on coding theory 
 in search of $[m,k,d]$ binary codes with $k$ small and the weight $d'$ of the dual code large, keeping in mind
 that we are obliged to work over $\Z$ rather than $\mathbb{F}_2$ (cf.~Proposition \ref{p:sad}).
 Yonathan Fruchter has made progress in this direction (private communication) and points out the relevance of selective families and
 superimposed codes \cite{cms}.

\subsection{K\"{a}hler Groups}  The work of Delzant and Gromov \cite{DG}
highlighted the important role that subdirect products of surface groups play in the struggle to understand which 
finitely presented groups are fundamental groups of compact K\"{a}hler manifolds. In a subsequent article,
Claudio Llosa Isenrich and I will exploit the constraints that he established in \cite{claudio2} to investigate whether  
binary subgroups of direct products of surface groups can be K\"{a}hler.

\subsection{Implementation} Picking up on the theme of Corollary \ref{c:H_1enough} and Remarks \ref{r:algo} and \ref{r:explore},
I would be interested to know how practical it is to implement the algorithms whose existence is discussed in this article, in particular
the algorithm that takes as input  a list $\sigma$ of $m$ distinct positive integers and gives as output (1) a finite presentation of 
the binary subgroup $B<F_2^m$ that it defines, and (2) the maximum $k$ such that $B$ is of type $\wFP_k$. 

For (1), it would be necessary to implement a special case of the Effective 1-2-3 Theorem from \cite{BHMS2} --
cf.~\cite{claudio1}.

\end{document}